\newcounter{lemma}[section]
\newcounter{corol}[section]
\newcounter{rem}[section]
\newcounter{theo}[section]
\newcounter{propo}[section]
\begin{document}

\title[Equicontinuity and normality]{Equicontinuity and normality of mappings with integrally bounded $p$-moduli}

\author[A. Golberg, R. Salimov and E. Sevost'yanov]{Anatoly Golberg, Ruslan Salimov and Evgeny Sevost'yanov}

\date{\today \hskip 3mm \currenttime \hskip 4mm (\texttt {ENMIBM.tex})}

\begin{abstract}
We consider the generic discrete open mappings in ${\Bbb
R}^n$ under which the perturbation of extremal lengths of curve
collections is controlled integrally via $\int Q(x)\eta^p(|x-x_0|)
dm(x)$ with $n-1<p<n$, where $Q$ is a measurable function on ${\Bbb
R}^n$ and $\int\limits_{r_1}^{r_2} \eta(r) dr \ge 1$ for any $\eta$
on a given interval $[r_1,r_2].$ We proved that the family of all
open discrete mappings of above type is normal under appropriate restrictions on the majorant $Q.$
\end{abstract}

\maketitle

\bigskip
{\small {\textbf {2010 Mathematics Subject Classification: }
Primary: 30C65, 31B15; Secondary: 30D45}}

\bigskip
{\small {\textbf {Key words:} weighted $p$-module, ring $Q$-mappings, lipschitz mappings,
quasiconformality in the mean, equicontinuity, normal families.}}

\medskip

\bigskip

\section{Introduction}
\subsection{} This paper continues our research of the generic properties of mappings with integrally bounded distortions. We consider discrete open mappings $f:D\to \overline{\mathbb R^n}$, $n\ge 2$, of domains $D\subset \mathbb R^n$, satisfying the inequality controlling the distortion of the distinguished $p$-module ($p>1$) by
\begin{equation}\label{eq3*!}
\mathcal M_p(f(\Gamma(S_1,S_2,A)))\le \int\limits_A Q(x)\,\eta^p(|x-x_0|)\, dm(x),
\end{equation}
when the test subdomains $A\subset D$ are spherical rings
$A=A(r_1,r_2,x_0)=\{\, x\in D\,:\,r_1<|x-x_0|<r_2\}$, $0<r_1<r_2<r_0:={\rm dist}\;(x_0,\partial D)$,
and $\eta$ is arbitrary measurable function $\eta:(r_1,r_2)\to [0,\infty]$ such that
\begin{equation}\label{eq9}
\int\limits_{r_1}^{r_2} \eta(r) dr \ge 1,
\end{equation}
while $Q\,:\,D\to [0,\infty]$ in (\ref{eq3*!}) is a given measurable function. The point $x_0$ is fixed in $D$.

The mappings satisfying (\ref{eq3*!}) are called \textit{ring
$(p,Q)$-mappings at the point $x_0$} (cf. \cite{Sev11}). Note also that
the integral in (\ref{eq3*!}) can be treated as a weighted module
(cf. \cite{AC71}).

Homeomorphisms of such type satisfying a slightly stronger condition than (\ref{eq3*!}) have been introduced in \cite{Gol09}. If $Q\in L^1_{\rm loc}$ and $n-1<p<n$, such homeomorphisms possess various differential properties: ACL, differentiability almost everywhere, Lusin's $(N)$-property, boundedness of Jacobian in the terms of $Q$, etc. All these properties are close to the features of lipschitz homeomorphisms (see, \cite{Gol09}, \cite{GS12}). For discrete open mappings the similar problems have been investigated in \cite{SS11}.

Recall that a mapping $f:D\to \overline{\mathbb R^n}$ is called lipschitz at $x_0\in D$ if there exist a constant $K$, $0<K<\infty$, such that
\begin{equation*}
\limsup\limits_{x\to x_0}\frac{|f(x)-f(x_0)|}{|x-x_0|}\le K.
\end{equation*}

Assuming the uniform boundedness of $Q(x)$ by $K$, one can derive that the mappings satisfying (\ref{eq3*!})-(\ref{eq9}) at $x_0$ are lipschitz at this point (see \cite[Theorem 3]{Geh71}).

On the other hand, from the topological point of view, the origins of the theory of $(p,Q)$-mappings go back to the mappings with bounded distortion (or quasiregular mappings) introduced by Reshetnyak in 1966 (see, e.g. \cite{Resh89}, \cite{Ric93}). A natural extension of quasiregular mappings based on their geometric (modular) description leads to so-called $Q$-mappings, having an independent interest (see \cite{MRSY09}). The main part of the theory of these mappings concerns various interconnections between the properties of the majorant $Q(x)$ and the corresponding properties of the mappings themselves (cf. \cite{BGMV03}, \cite{GRSY12}). Note that the main underlying features of all above mappings are openness and discreteness.

For illustration, we use the following example of ring $(p,Q)$-homeomorphisms for $p=n$
from \cite{GG09}. The quasiconformal homeomorphism
\begin{equation*}
g_1(x)=(x_1\cos\theta-x_2\sin\theta, x_1\sin\theta+x_2\cos\theta, x_3,\ldots,x_n),\quad f(0)=0,\quad |x|<1,
\end{equation*}
where $x=(x_1,\ldots,x_n)$ and $\theta=\log(x_1^2+x_2^2)$, is an automorphism of the unit ball in $\mathbb R^n$.
Its dilatation coefficient equals $(1+\sqrt 2)^n$, so this mapping is a $Q$-ho\-me\-o\-mor\-phism with $Q(x)=(1+\sqrt 2)^n>1$ at all $x\ne 0$.

In the case of a ring $Q$-homeomorphism at the origin, the controlling function $Q$ can be replaced by the angular dilatation. Accordingly, in this example one can put $Q(x)\equiv 1$. On the other hand, any $Q$-ho\-me\-o\-mor\-phism with $Q \equiv 1$ must be, due to the well-known Liouville theorem, a restriction of a M\"obius transformation.

Now we construct a discrete open $(p, Q)$-mapping $g_2:{\mathbb
R}^n\rightarrow {\mathbb R}^n$ with $p\ne n$ and locally integrable $Q$ which is not a local homeomorphism. Consider a rotation around an axis in ${\mathbb R}^n,$ $n\ge 2,$ defined as follows. Let $m\in {\mathbb N},$ $x=(x_1,\ldots, x_{n-2}, x_{n-1},
x_n)\in {\mathbb R}^n.$ For $x_{n-1}^2+x_{n}^2=0$ set $g_2(x)=x,$
and for $x_{n-1}^2+x_{n}^2\ne 0,$ $x_{n-1}=r\cos\varphi,$
$x_{n}=r\sin\varphi$ ($r=\sqrt{x_{n-1}^2+x_n^2},$ $\varphi\in [0,
2\pi)$), let
\begin{equation*}
g_2(x)=(x_1,\ldots, x_{n-2}, r\cos
m\varphi, r\sin m\varphi).
\end{equation*}
For this mapping, $l(g_2^{\,\prime}(x))=1$ and $J(x, g_2)=m$; see \cite[Section~4.3,
Ch.~I]{Resh89}. Then the dilatation coefficient defined by $K_{I, p}(x,
g_2)=J(x,g_2)/l^p(g_2^{\,\prime}(x))$ is identically equal to  $m={\rm const}.$ This is a mapping with bounded distortion by Reshetnyak (or quasiregular). In addition, $g_2$ satisfies the module inequality
\begin{equation*}
\mathcal M_p(g_2(\Gamma))\le
m\, \mathcal M_p(\Gamma)
\end{equation*}
for any family of curves $\Gamma.$ Thus $g_2$ also satisfies
(\ref{eq3*!})--(\ref{eq9}) at any point $x_0\in {\mathbb R}^n.$ Note that $g_2$ is not a homeomorphism in neighborhoods of the points of
the $(n-2)$-dimensional plane $P=\{x\in {\mathbb R}^n:
x=(x_1,\ldots,x_n),\,x_{n-1}^2+x_{n}^2=0\}.$ Composing this mapping with $g_1,$ one obtains a discrete open ring $(p,Q)$-mapping at the origin.

\medskip
In our recent paper \cite{GSS13}, we established that under appropriate conditions on the majorant $Q$, the mappings obeying (\ref{eq3*!}) may have only removable singularities, i.e. neither poles nor essential singularities.

\subsection{} The purpose of this paper is to investigate the normality of families of discrete open continuous mappings which are subject
to (\ref{eq3*!}). The normal families play an important role in Complex Analysis. In fact, they are pre-compact families of continuous functions. Roughly speaking, the functions in the family provide a somewhat ``closeness'' and behave in a relatively ``compact'' manner. The notion of normality was introduced about 100 years ago and became a crucial tool in studying compact sets in the spaces of functions, which are infinite-dimensional in nature (see, e.g. \cite{Zal98}).
More precisely, a family $\mathcal F$ of continuous functions $f$ defined on some metric space $(X, d_X)$ with values in another metric space $(Y, d_Y)$ is called \textit{normal} if every sequence of functions in $\mathcal F$ contains a subsequence which converges uniformly on compact subsets of $X$ to a continuous function from $X$ to $Y$.
Another notion closely related to normality is the equicontinuity of a function family. The family $\mathcal F$ is equicontinuous at a point $x_0\in X$ if for every $\varepsilon > 0$, there exists $\delta > 0$ such that $d_Y(f(x_0), f(x)) < \varepsilon$ for all $f\in \mathcal F$ and all $x$ such that $d_X(x_0, x) < \delta$. The family is equicontinuous if it is equicontinuous at each point of $X$. Thus, by the well-known Ascoli theorem, normality is equivalent to equicontinuity on compact sets of the functions in $\mathcal F$.

\medskip
It was established in \cite{Sev07}, that if a family of discrete open mappings
$f:D\setminus\{x_0\}\rightarrow \overline{{\mathbb R}^n},$ $n\ge 2,$ which omits the values ranging over some set of positive
conformal capacity and obeys (\ref{eq3*!}) for $p=n$, is normal (cf. \cite{Cris10}). In this paper we are focused to the case $n-1<p<n.$
The range $(n-1,n)$ for $p$ is needed in order to apply, similar to the classical case $p=n$, the ``continuum'' analysis, since for $n-1<p<n$ there appear some essential differences from the case $p=n$. One of those is the role of the infinite point $x=\infty.$ Namely, if $p=n,$ the point $\infty$ plays the same role as any other point in $\mathbb R^n$. In contract, when $p < n,$ the point $\infty$ has positive $p$-capacity (cf. \cite[Lemma 2.3]{GSS13}).  Another essential difference allows us to establish normality without an expected requirement ${\rm cap}_p\left({\mathbb R}^n\setminus
f(D\setminus \{x_0\})\right)>0.$

\medskip
\subsection{} A function ${\varphi}:D\rightarrow{\mathbb R}$ is of {\it finite mean oscillation} at a point $x_0\in D,$ $\varphi\in{\rm FMO}(x_0),$  if
\begin{equation*}
\overline{\lim\limits_{\varepsilon\rightarrow 0}}
\,\,\frac{1}{\Omega_n\varepsilon^n}\int\limits_{B( x_0,\,\varepsilon)}
|{\varphi}(x)-\overline{{\varphi}}_{\varepsilon}| dm(x)<\infty,
\end{equation*}
where
\begin{equation*}
\overline{{\varphi}}_{\varepsilon}=\frac{1}{\Omega_n\varepsilon^n}\int\limits_{B(
x_0,\,\varepsilon)} {\varphi}(x)\ dm(x),
\end{equation*}
and $\Omega_n$ denotes the volume of the unit ball  ${\mathbb B}^n$ in ${\mathbb R}^n.$  The class ${\rm FMO}$ was introduced in \cite{IR05} and provides a natural extension of the well-known class of mappings with bounded mean oscillation (${\rm BMO}$) widely applied in Harmonic Analysis, PDE and many other areas.

\medskip
The following statement is one of the main results of the paper. We make the convention that henceforth the term ``$f$ is discrete open'' includes the assumption that $f$ is also continuous.

\medskip
\begin{theo}\label{th1}
{\em Let $Q:D\rightarrow[0, \infty]$ be a Lebesgue measurable
function, $b\in D,$ $n-1<p<n,$ and let $\frak{F}_{p, Q}(b)$ be a
family of all open discrete $(p, Q)$-mappings $f:D\rightarrow {\mathbb
R}^n$ at $b.$ Assume that $Q\in FMO(b).$ Then $\frak{F}_{p, Q}(b)$ is equicontinuous at $b.$ Moreover, if
$\frak{F}_{p, Q}(D)$ is a family of all open discrete $(p,
Q)$-mappings $f:D\rightarrow {\mathbb R}^n$ at every point $b\in D$ and
the condition $Q\in {\rm FMO}(b)$ holds for every point $b\in D,$ then the
family $\frak{F}_{p, Q}(D)$ is normal in $D.$ }
\end{theo}

\medskip
As was mentioned above, the assertion of Theorem \ref{th1} is not
true for $p=n$. As an example, one can take $f_m(z)=e^{mz},$ $z\in {\mathbb C},$ $b=0$.

\section{Auxiliary results}

\subsection{} We start with introducing $p$-capacity and $p$-module of arbitrary sets and describe their basic features, lower growth estimates, etc.

\medskip
Recall that a pair
$E=\left(A,\,C\right),$ where $A$ is an open set in ${\mathbb R}^n,$
and $C$ is a compact subset of $A,$ is called \textit{condenser} in ${\mathbb R}^n$.  The quantity
\begin{equation}\label{eq4}{\rm cap}_p\,E\quad=\quad{\rm
cap}_p\,\left(A,\,C\right)\quad=\quad\inf\limits_{u\,\in\,W_0\left(E\right)
}\quad\int\limits_{A}\,|\nabla u|^p\,\,dm(x)\,,
\end{equation}
where $W_0(E)=W_0\left(A,\,C\right)$ is a family of all nonnegative
absolutely continuous on lines (ACL) functions $u:A\rightarrow {\mathbb R}$ with compact support
in $A$ and such that $u(x)\ge 1$ on $C,$ is called the \textit{$p$-capacity} of
condenser $E$.

We say that a compact set $C$ in ${\mathbb R}^n,$ $n\ge
2,$ has \textit{zero $p$-capacity} (and write ${\rm cap}_p\,C=0$), if there exists
a bounded open set $A$ such that $C\subset A$ and ${\rm cap}_p\,(A,
C)=0.$ Otherwise, the set $C$ has positive $p$-capacity, ${\rm cap}_p\, C>0.$
An arbitrary set $F\subset{\mathbb R}^n$ is of $p$-capacity zero if all its compact subsets have zero capacity.

\medskip
The basic lower estimate of $p$-capacity of a condenser $E=(A, C)$ in
${\mathbb R}^n$ is given by
\begin{equation}\label{2.5} {\rm cap}_p\ E = {\rm cap}_p\ (A, C) \ge
\left(c_1\frac{\left(d(C)\right)^p}
{\left(m(A)\right)^{1-n+p}}\right)^{\frac{1}{n-1}}\,, \qquad p>n-1,
\end{equation}
where $c_1$ depends only on $n$ and $p$ and $d(C)$ denotes the diameter of $C$ (see
\cite[Proposition~6]{Kru86}).

\medskip
The following proposition provides the lower estimate for $p$-capacities involving the compactum volume; see e.g. \cite[(8.9)]{Maz03}.

\medskip
\begin{propo}\label{pr1A} 
{\em Given a condenser $E=(A, C)$ and  $1<p<n,$
\begin{equation*}
{\rm cap}_p\,E\ge n{\Omega}^{\frac{p}{n}}_n
\left(\frac{n-p}{p-1}\right)^{p-1}\left[m(C)\right]^{\frac{n-p}{n}}\,,
\end{equation*}
where ${\Omega}_n$ denotes the volume of the unit ball in ${\Bbb
R}^n$, and $m(C)$ is the $n$-dimensional Lebesgue measure of $C$.}
\end{propo}

\medskip
A curve $\gamma$ in ${\mathbb R}^n$ $(\,\overline{{\mathbb R}^n}\,)$ is a continuous mapping $\gamma
:\Delta\rightarrow{\mathbb R}^n\;(\,\overline{{\mathbb R}^n}\,),$ where $\Delta$ is an interval in
${\mathbb R} .$ Its locus $\gamma(\Delta)$ is denoted by $|\gamma|.$
Let $\Gamma$ be a family of curves $\gamma$ in ${\mathbb R}^n .$ A Borel
function $\rho:{\mathbb R}^n \rightarrow [0,\infty]$ is called {\it
admissible} for $\Gamma ,$ abbr. $\rho \in {\rm adm}\, \Gamma ,$ if
$\int\limits_{\gamma} \rho(x)|dx| \ge 1$
for each (locally rectifiable) $\gamma\in\Gamma.$ For $p\ge 1,$
we define the quantity
\begin{equation*}
\mathcal M_p(\Gamma)\,=\,\inf\limits_{ \rho \in {\rm adm}\, \Gamma}
\int\limits_{{\mathbb R}^n} \rho^p(x) dm(x)
\end{equation*}
and call $\mathcal M_p(\Gamma)$ {\it $p$-mo\-du\-le} of $\Gamma$ \cite[6.1]{Vai71}; here $m$ stands for the $n$-dimensional Lebesque measure (see also \cite{Vuo88}, \cite{Ric93}).

If ${\rm adm}\, \Gamma = \varnothing $ we set $\mathcal M_p(\Gamma)=\infty$. Note that
$\mathcal M_p(\varnothing)=0;$ $\mathcal M_p(\Gamma_1)\le \mathcal M_p(\Gamma_2)$ whenever
$\Gamma_1\subset\Gamma_2,$ and moreover
$\mathcal M_p\left(\bigcup\limits_{i=1}^{\infty}\Gamma_i\right)\le
\sum\limits_{i=1}^{\infty}\mathcal M_p(\Gamma_i)$ (see
\cite[Theorem~6.2]{Vai71}).

We say that $\Gamma_1$ is \textit{minorized} by $\Gamma_2$ and write $\Gamma_2 < \Gamma_1$ if every $\gamma\in\Gamma_1$ has a subcurve which belongs to $\Gamma_2$. The relation $\Gamma_2 < \Gamma_1$ implies that ${\rm adm}\,\Gamma_2 \subset {\rm adm}\,\Gamma_1$ and therefore $\mathcal M_p(\Gamma_1)\le \mathcal M_p(\Gamma_2)$.

\medskip
Let $E_0,$ $E_1$ are two sets in $D\subset \overline{\mathbb R^n}$. Denote by $\Gamma (E_0, E_1, D)$ a family of al curves joining $E_0$ and $E_1$ in $D$. For such families, we have the following estimate  (see \cite[Theorem~4]{Car94}).

\medskip
\begin{propo}\label{pr1} 
{\em Let $A(a,
b, 0)=\{a<|x|<b\}$ be a ring containing in $D$ such that for every $r\in (a,b)$ the sphere $S(0,
r)$ meets both $E_0$ and $E_1,$ where $E_0\cap
E_1=\varnothing.$ Then for every $p\in(n-1, n),$
\begin{equation*}
\mathcal M_p\left(\Gamma\left(E_0, E_1, D\right)\right)\ge
\frac{2^nb_{n,p}}{n-p}\left(b^{n-p}-a^{n-p}\right)\,,
\end{equation*}
where $b_{n,p}$ is a constant depending only on $n$ and $p$.}
\end{propo}

\medskip
We also need the following statement given in \cite[Proposition~10.2,
Ch.~II]{Ric93}.

\medskip
\begin{propo}\label{lem2.2} 
{\em\, Let $E=(A,\,C)$ be a condenser in ${\mathbb R}^n$ and let
$\Gamma_E$ be the family of all curves of the form
$\gamma:[a,\,b)\rightarrow A$ with $\gamma(a)\in C$ and
$|\gamma|\cap\left(A\setminus F\right)\ne\varnothing$ for every
compact $F\subset A.$ Then ${\rm cap}_q\,E=\mathcal
M_q\left(\Gamma_E\right).$}
\end{propo}

\medskip
Note that Proposition \ref{lem2.2} allows us to give a natural extension  of $p$-capacity of a condenser $E\subset\overline{\mathbb R^n}$ by ${\rm cap}_q\,E=\mathcal
M_q\left(\Gamma_E\right).$

\subsection{}
Let $f:D \rightarrow {\mathbb R}^n$ be a discrete open mapping, $\beta:
[a,\,b)\rightarrow {\mathbb R}^n$ be  a curve, and
$x\in\,f^{-1}\left(\beta(a)\right).$ A curve $\alpha:
[a,\,c)\rightarrow D$ is called a {\it maximal $f$-lifting} of
$\beta$ starting at $x,$ if $(1)\quad \alpha(a)=x\,;$ $(2)\quad
f\circ\alpha=\beta|_{[a,\,c)};$ $(3)$\quad for $c<c^{\prime}\le b,$
there is no curves $\alpha^{\prime}: [a,\,c^{\prime})\rightarrow
D$ such that $\alpha=\alpha^{\prime}|_{[a,\,c)}$ and $f\circ
\alpha^{\,\prime}=\beta|_{[a,\,c^{\prime})}.$  The assumption on $f$
yields that every  curve $\beta$ with $x\in
f^{\,-1}\left(\beta(a)\right)$ has  a maximal $f$-lif\-ting starting
at $x$ (see \cite[Corollary~II.3.3]{Ric93}, \cite[Lemma~3.12]{MRV71}).

\begin{lemma}\label{lem4} 
{\em Let $f:D\rightarrow\overline{{\mathbb R}^n},$ $n\ge 2,$ be an open
discrete ring $(p, Q)$-map\-ping at a point $x_0\in D.$ Suppose that
there exist numbers $\varepsilon_0 \in (0,\, {\rm dist}\,(x_0,
\partial D)),$ $\varepsilon_0^{\,\prime}\in (0, \varepsilon_0)$ and
a family of nonnegative Lebesgue measurable functions
$\{\psi_{\varepsilon}(t)\},$ $\psi_{\varepsilon}:(\varepsilon,
\varepsilon_0)\rightarrow [0, \infty],$ $\varepsilon\in\left(0,
\varepsilon_0^{\,\prime}\right),$ such that
\begin{equation}\label{eq3.7B}
\int\limits_{\varepsilon<|x-x_0|<\varepsilon_0}Q(x)\psi_{\varepsilon}^p(|x-x_0|)
\ dm(x)\le \Phi(\varepsilon,
\varepsilon_0)\quad\text{for all}\quad \varepsilon\in(0,
\varepsilon_0^{\,\prime})\,,
\end{equation}
where $\Phi(\varepsilon, \varepsilon_0)$ is a given bounded function and
\begin{equation}\label{eq3}
0<I(\varepsilon, \varepsilon_0):=
\int\limits_{\varepsilon}^{\varepsilon_0}\psi_{\varepsilon}(t)dt <
\infty\quad\text{for all}\quad\varepsilon\in(0,
\varepsilon_0^{\,\prime})\,.
\end{equation}
Then for the condensers $E=\left(A,\,C\right)$ with $A=B\left(x_0, r_0\right),$
$C=\overline{B(x_0, \varepsilon)},$ $r_0={\rm
dist}\left(x_0,\,\partial D\right)$  ($A={\mathbb R}^n$ whenever
$D={\mathbb R}^n$)
\begin{equation}\label{eq3B}
{\rm cap}_p\,f(E)\le \Phi(\varepsilon,\varepsilon_0)/
I^{p}(\varepsilon, \varepsilon_0)\quad\text{for all}\quad\varepsilon\in\left(0,\,\varepsilon_0^{\,\prime}\right)\,.
\end{equation}}
\end{lemma}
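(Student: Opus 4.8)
The plan is to bound ${\rm cap}_p\,f(E)$ from above by the $p$-module of the $f$-image of a suitable connecting family, and then to estimate that module directly from the defining inequality (\ref{eq3*!}) with a well-chosen test function. First I would introduce the connecting family $\Gamma^{*}=\Gamma\bigl(S(x_0,\varepsilon),S(x_0,\varepsilon_0),A(\varepsilon,\varepsilon_0,x_0)\bigr)$ of all curves joining the two boundary spheres inside the ring $A(\varepsilon,\varepsilon_0,x_0)$. Since $0<\varepsilon<\varepsilon_0<r_0$, this ring lies in $D$, so the ring $(p,Q)$-inequality (\ref{eq3*!}) applies to it. Choosing the test function $\eta(t)=\psi_{\varepsilon}(t)/I(\varepsilon,\varepsilon_0)$ on $(\varepsilon,\varepsilon_0)$, which by (\ref{eq3}) is nonnegative, measurable and satisfies the normalization $\int_{\varepsilon}^{\varepsilon_0}\eta(t)\,dt=1$ required in (\ref{eq9}), I would combine (\ref{eq3*!}) with the hypothesis (\ref{eq3.7B}) to obtain
\[
\mathcal M_p\bigl(f(\Gamma^{*})\bigr)\le\frac{1}{I^{p}(\varepsilon,\varepsilon_0)}\int\limits_{\varepsilon<|x-x_0|<\varepsilon_0}Q(x)\psi_{\varepsilon}^{p}(|x-x_0|)\,dm(x)\le\frac{\Phi(\varepsilon,\varepsilon_0)}{I^{p}(\varepsilon,\varepsilon_0)}.
\]

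Second, I would pass to the image condenser $f(E)=\bigl(f(A),f(C)\bigr)$, which is a genuine condenser because $f$ is open (so $f(A)$ is open) and $C$ is compact in $A$ (so $f(C)$ is a compactum in $f(A)$). Proposition \ref{lem2.2}, in the form extended to $\overline{\mathbb R^n}$ as noted after its statement, then gives ${\rm cap}_p\,f(E)=\mathcal M_p(\Gamma_{f(E)})$, where $\Gamma_{f(E)}$ is the family of curves issuing from $f(C)$ and eventually leaving every compact subset of $f(A)$.

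The technical heart of the argument is to establish the minorization $f(\Gamma^{*})<\Gamma_{f(E)}$, that is, to show that every $\beta\in\Gamma_{f(E)}$ contains a subcurve of the form $f\circ\gamma$ with $\gamma\in\Gamma^{*}$. Given such a $\beta$ with $\beta(a)=f(x)$ for some $x\in C=\overline{B(x_0,\varepsilon)}$, I would take a maximal $f$-lifting $\alpha\colon[a,c)\to A$ of $\beta$ starting at $x$, its existence being the lifting result quoted just before this lemma, applied to the open discrete restriction $f|_A$. The crucial claim is that the continuous function $t\mapsto|\alpha(t)-x_0|$ must attain the value $\varepsilon_0$. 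This follows from the terminal dichotomy for maximal liftings: if the lift stops short of the parameter interval of $\beta$, then maximality forces $\alpha(t)\to\partial A=S(x_0,r_0)$, so $|\alpha(t)-x_0|\to r_0>\varepsilon_0$; and if $\alpha$ is defined over all of $\beta$, then since $\beta$ escapes every compact subset of $f(A)$ so does $\alpha$ in $A$, whence $\alpha$ leaves the compactum $\overline{B(x_0,\varepsilon_0)}$. As $|\alpha(a)-x_0|\le\varepsilon<\varepsilon_0$, in either case the value $\varepsilon_0$ is attained, and excising the portion of $\alpha$ between the last parameter prior to this at which $|\alpha-x_0|=\varepsilon$ and the first parameter at which $|\alpha-x_0|=\varepsilon_0$ produces a curve $\gamma\in\Gamma^{*}$ with $f\circ\gamma$ a subcurve of $\beta$.

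Finally, the minorization yields $\mathcal M_p(\Gamma_{f(E)})\le\mathcal M_p(f(\Gamma^{*}))$, and chaining this with the two facts above gives ${\rm cap}_p\,f(E)\le\Phi(\varepsilon,\varepsilon_0)/I^{p}(\varepsilon,\varepsilon_0)$ for every $\varepsilon\in(0,\varepsilon_0^{\,\prime})$, as claimed. I expect the only genuine obstacle to be the lifting step: rigorously justifying that the maximal lift crosses $S(x_0,\varepsilon_0)$ rests on the precise terminal behaviour of maximal $f$-liftings, and the borderline situation in which $f$ assumes the value $\infty$, so that the image condenser lives in $\overline{\mathbb R^n}$, has to be accommodated through the extended capacity of Proposition \ref{lem2.2}.
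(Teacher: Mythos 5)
Your proposal is correct and follows essentially the same route as the paper's proof: bound ${\rm cap}_p\,f(E)=\mathcal M_p(\Gamma_{f(E)})$ via maximal $f$-liftings by the module of the image of the family joining $S(x_0,\varepsilon)$ to $S(x_0,\varepsilon_0)$, then apply the ring $(p,Q)$-inequality with the normalized test function $\psi_{\varepsilon}/I(\varepsilon,\varepsilon_0)$. In fact you supply more detail than the paper on the one step it leaves implicit, namely why each maximal lifting must cross the outer sphere.
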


\begin{proof} Let $E=(A,\,C)$ a condenser  with $A$ and $C$
defined as above. By the continuity and openness of $f,$ the pair
$f(E)=(f(A), f(C))$ is also a condenser.
In the case ${\rm cap}_p\,f(E)=0$ the inequality (\ref{eq3B}) is trivial; thus, one can assume that
${\rm cap}_p\,f(E)\ne 0.$

\medskip
Let $\Gamma_E$ be a family of curves $\gamma:[a,\,b)\rightarrow A$ such that $\gamma(a)\in C$ and
$|\gamma|\cap\left(A\setminus F\right)\ne\varnothing$ for every
compact set $F\subset A$, where $|\gamma|=\{x\in {\mathbb R}^n:
\exists\,t\in [a, b): \gamma(t)=x \}$ is a locus of the curve
$\gamma.$ By Proposition \ref{lem2.2}, ${\rm cap}_p\,E=\mathcal M_p(\Gamma_E)$.

\medskip
Consider a similar family of curves $\Gamma_{f(E)}$ for the condenser
$f(E)$ and observe that every $\gamma\in\Gamma_{f(E)}$ has a maximal
$f$-lif\-ting in $A$ starting in $C$ (cf. \cite[Corollary 3.3,
Ch.~II]{Ric93}). Let $\Gamma^{*}$ be a family of all maximal
$f$--lif\-tings of $\Gamma_{f(E)}$ starting in $C.$ Note that
$\Gamma^{*}\subset \Gamma_E.$ Then
$\Gamma_{f(E)}>f(\Gamma^{*}),$ and hence,
\begin{equation}\label{eq7}
\mathcal M_p\left(\Gamma_{f(E)}\right)\le \mathcal M_p\left(f(\Gamma^{*})\right)\,.
\end{equation}
Consider two spheres $S_{\,\varepsilon}=S(x_0,\,\varepsilon),$ $
S_{\,\varepsilon_0}=S(x_0,\,\varepsilon_0),$ bounding the ring
$A(\varepsilon, \varepsilon_0, x_0)=\{x\in {\mathbb R}^n: \varepsilon<|x-x_0|<\varepsilon_0\},$
where $\varepsilon_0$ is given in assumption of lemma, and
$\varepsilon\in\left(0,\,\varepsilon_0^{\,\prime}\right).$
Since $\Gamma\left(S_{\varepsilon}, S_{\varepsilon_0},
A(\varepsilon, \varepsilon_0, x_0)\right)<\Gamma^{\,*}$ and
hence, $f(\Gamma\left(S_{\varepsilon}, S_{\varepsilon_0},
A(\varepsilon, \varepsilon_0, x_0)\right))<f(\Gamma^{*}),$ we have
\begin{equation}\label{eq5}
\mathcal M_p\left(f(\Gamma^{*})\right)\le
\mathcal M_p\left(f\left(\Gamma\left(S_{\varepsilon}, S_{\varepsilon_0},
 A(\varepsilon, \varepsilon_0, x_0)\right)\right)\right)\,.
\end{equation}
From (\ref{eq7}) and (\ref{eq5}),
\begin{equation*}
\mathcal M_p\left(\Gamma_{f(E)}\right)\,\le
\mathcal M_p\left(f\left(\Gamma\left(S_{\varepsilon}, S_{\varepsilon_0},
 A(\varepsilon, \varepsilon_0, x_0)\right)\right)\right).
\end{equation*}
Applying Proposition \ref{lem2.2}, one gets
\begin{equation}\label{eq5aa}
{\rm cap}_p\,\,f(E) \le
\mathcal M_p\left(f\left(\Gamma\left(S_{\varepsilon}\,, S_{\varepsilon_0},
 A(\varepsilon, \varepsilon_0, x_0)\right)\right)\right)\,.
\end{equation}

Now pick a family of Lebesgue measurable functions
$\eta_{\varepsilon}(t)=\psi_{\varepsilon}(t)/I(\varepsilon,
\varepsilon_0 ),$ $t\in(\varepsilon,\, \varepsilon_0).$ For any
$\varepsilon\in (0, \varepsilon_0^{\,\prime}),$
$\int\limits_{\varepsilon}^{\varepsilon_0}\eta_{\varepsilon}(t)\,dt=1,$
and by the definition of ring $Q$-mappings at the point $x_0,$
\begin{equation}\label{eq8***}
\mathcal M_p\left(f\left(\Gamma\left(S_{\varepsilon}\,, S_{\varepsilon_0},
A(\varepsilon, \varepsilon_0, x_0)\right)\right)\right)\,
\le\frac{1}{I^p(\varepsilon, \varepsilon_0)}
\int\limits_{\varepsilon<|x-x_0|<\varepsilon_0}Q(x)\psi_{\varepsilon}^p(|x-x_0|)\,
\ dm(x)
\end{equation}
for any $\varepsilon\in (0, \varepsilon_0^{\,\prime}).$ Now the estimate
(\ref{eq3B}) directly follows from (\ref{eq3.7B}),
(\ref{eq5aa}) and (\ref{eq8***}).
\end{proof}

\medskip
\subsection{} The chordal metric $h$ in $\overline{{\mathbb
R}^n}$ is defined by
\begin{equation*}
h(x,\infty)=\frac{1}{\sqrt{1+{|x|}^2}}, \ \
h(x,y)=\frac{|x-y|}{\sqrt{1+{|x|}^2} \sqrt{1+{|y|}^2}}\,, \ \  x\ne
\infty\ne y\,.
\end{equation*}
The {\it chordal diameter} of a set $E\,\subset\,\overline{{\mathbb
R}^n}$ is defined by (see, e.g. \cite{Vuo88})
\begin{equation}\label{chdiam}
h(E)=\sup\limits_{x\,,y\,\in\,E}\,h(x,y)\,.
\end{equation}

The following statement was proved for $p=n$ in
\cite[Lemma~3.11]{MRV$_2$} (see also \cite[Lemma~2.6,
Ch.~III]{Ric93}).

\medskip
\begin{lemma}\label{lem1A} 
{\em\, Let $F$ be a compact proper subset of $\overline{{\mathbb R}^n}$
with ${\rm cap}_p\,\left(F\setminus\{\infty\}\right)>0,$ $n-1<p<n.$ Then for every $a>0$ there
exists $\delta>0$ such that
\begin{equation*}
{\rm cap}_p\,\left(\overline{{\mathbb
R}^n}\setminus F,\, C\right)\ge \delta
\end{equation*}
for every
continuum $C\subset \overline{{\mathbb R}^n}\setminus F$ with $h(C)\ge
a.$ }
\end{lemma}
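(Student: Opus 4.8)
The plan is to prove the lemma by contradiction, after converting the condenser capacity into a curve module so that the ring estimate of Proposition \ref{pr1} becomes available. First I would invoke Proposition \ref{lem2.2} to write ${\rm cap}_p(\overline{\mathbb R^n}\setminus F,C)=\mathcal M_p(\Gamma_E)$, where $\Gamma_E$ consists of the curves starting in $C$ and leaving every compact subset of the open set $\overline{\mathbb R^n}\setminus F$. Every curve joining $C$ to $F$ in $\overline{\mathbb R^n}$ has an initial subcurve that stays in $\overline{\mathbb R^n}\setminus F$ and approaches $\partial F$, hence leaves every compact subset; thus $\Gamma_E<\Gamma(C,F,\overline{\mathbb R^n})$ and ${\rm cap}_p(\overline{\mathbb R^n}\setminus F,C)\ge\mathcal M_p(\Gamma(C,F,\overline{\mathbb R^n}))$. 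It therefore suffices to bound the connecting module $\mathcal M_p(\Gamma(C,F,\overline{\mathbb R^n}))$ from below by a constant $\delta=\delta(a,F,n,p)>0$, uniformly over all continua $C\subset\overline{\mathbb R^n}\setminus F$ with $h(C)\ge a$.

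Suppose such a $\delta$ did not exist. Then there are continua $C_j\subset\overline{\mathbb R^n}\setminus F$ with $h(C_j)\ge a$ and $\mathcal M_p(\Gamma(C_j,F,\overline{\mathbb R^n}))\to 0$. Since $(\overline{\mathbb R^n},h)$ is a compact metric space, the space of its continua is compact in the Hausdorff metric, so after passing to a subsequence $C_j\to C_0$ with $C_0$ a continuum; as the chordal diameter is continuous under Hausdorff convergence, $h(C_0)\ge a$, so $C_0$ is nondegenerate. The decisive point is that, because $n-1<p<n$, a nondegenerate continuum has positive $p$-capacity (its Hausdorff dimension is at least $1>n-p$). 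Consequently both plates of the limiting condenser are of positive $p$-capacity: $C_0$ by the previous remark and $F$ by hypothesis.

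It remains to transfer this positivity back to the sequence. If $C_0\cap F=\varnothing$, then $C_0$ and $F$ lie at a fixed positive chordal distance, and for large $j$ the set $C_j$ is a continuum of chordal diameter $\ge a$ separated from $F$ by a fixed gap; choosing a center $x_\ast$ so that the connected set $C_j$ meets every sphere of a spherical ring $A(\alpha,\beta,x_\ast)$ which also meets a fixed positive-capacity piece of $F$, Proposition \ref{pr1} supplies a lower bound of the form $\tfrac{2^nb_{n,p}}{n-p}(\beta^{n-p}-\alpha^{n-p})$, independent of $j$; this contradicts $\mathcal M_p(\Gamma(C_j,F,\overline{\mathbb R^n}))\to 0$. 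If instead $C_0\cap F\ne\varnothing$, the continua $C_j$ of chordal diameter $\ge a$ approach $F$ arbitrarily closely, so the connecting modulus stays bounded away from $0$ (indeed grows without bound), which is again incompatible with the hypothesis. Either way we reach a contradiction, proving the lemma.

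I expect the quantitative step via Proposition \ref{pr1} to be the main obstacle. In contrast with the conformal case $p=n$, the $p$-capacity for $n-1<p<n$ is not invariant under M\"obius transformations, so one cannot normalise the configuration by sending an auxiliary point to $\infty$; the location of $\infty$ must be tracked throughout (this is precisely the difficulty anticipated in the introduction). A second, related difficulty is that $F$ need not be connected, so its spheres need not meet $F$ and Proposition \ref{pr1} cannot be applied to $F$ verbatim; the remedy is to replace $F$ by a suitable compact subset of positive $p$-capacity and to realise the ring through the connected set $C_j$, using the positive capacity of $F$ only to ensure that the limiting condenser is nondegenerate. Finally, the passage to the Hausdorff limit must be accompanied by a careful justification of the lower semicontinuity of the $p$-module along $C_j\to C_0$.
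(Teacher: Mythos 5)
Your opening reduction (capacity of the condenser equals the module of the connecting family, via Proposition \ref{lem2.2}) matches the paper, and you correctly flag the central difficulty: $F$ need not be connected, so Proposition \ref{pr1} cannot be applied to a ring whose spheres are required to meet $F$. But your proposed remedy does not close this gap, and the compactness/contradiction scheme does not substitute for the missing quantitative step. Proposition \ref{pr1} requires \emph{both} plates $E_0$ and $E_1$ to meet every sphere $S(x_*,r)$, $r\in(\alpha,\beta)$; ``realising the ring through the connected set $C_j$'' only secures this for $C_j$, and if $F$ is, say, a Cantor-type compactum of positive $p$-capacity there is no ring all of whose spheres meet $F$. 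Hence in your case $C_0\cap F=\varnothing$ the ring estimate is simply not applicable, and in the case $C_0\cap F\ne\varnothing$ the claim that the connecting module ``grows without bound'' is unsupported (for $n-1<p<n$ the bound coming from Proposition \ref{pr1} in fact stays finite as the separation tends to $0$); nowhere do you convert the positivity of ${\rm cap}_p\,F$ into a lower bound on $\mathcal M_p(\Gamma(C_j,F,\overline{\mathbb R^n}))$. A secondary unresolved point is the lower semicontinuity of the connecting module along the Hausdorff convergence $C_j\to C_0$, which you acknowledge but never establish; it is essentially as hard as the uniform bound you are after.

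The idea you are missing is the three-family admissibility trick, which is how the paper makes the positive capacity of a possibly totally disconnected $F$ quantitative. After reducing to a bounded $F\subset B(0,R)$ and extracting a subcontinuum $C_1\subset B(0,R)$ with $h(C_1)\ge a/4$, one introduces $\Gamma_1=\Gamma(F,S(0,2R),B(0,2R))$ and $\Gamma_2=\Gamma(C_1,S(0,2R),B(0,2R))$: the first has module $\delta_1>0$ because ${\rm cap}_p\,F>0$ (Proposition \ref{lem2.2}), the second has module at least $\delta_2>0$ by the diameter--volume bound (\ref{2.5}). For any $\rho\in{\rm adm}\,\Gamma(C_1,F,\overline{\mathbb R^n})$, either $3\rho$ is admissible for $\Gamma_1$ or for $\Gamma_2$ (and we are done), or there are curves $\gamma_1\in\Gamma_1$, $\gamma_2\in\Gamma_2$ each of $\rho$-length less than $1/3$; then every curve joining $|\gamma_1|$ to $|\gamma_2|$ in $B(0,2R)\setminus\overline{B(0,R)}$ has $\rho$-length at least $1/3$, and Proposition \ref{pr1} \emph{is} applicable to this last family because the two curve loci each cross the annulus and therefore meet every sphere $S(0,r)$, $R<r<2R$. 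This yields $\delta=\min\{\delta_1,\delta_2,\delta_3\}$ uniformly in $C$, with no compactness argument. Your Hausdorff-limit framework could in principle be repaired, but only by inserting precisely this kind of estimate, at which point the direct proof is shorter.
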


\begin{proof}
One can assume that $F$ does not contain $\infty$. Otherwise,
one can find a compact subset $F_1$ of $F$ such that $F_1$ is bounded in $\mathbb R^n$ and ${\rm cap}_p\,F_1>0.$
Passing it needed to $F_1$, we may assume that $F$ is bounded, hence contained in a ball $B(0,R)$ whose radius $R$ can be chosen to be so large that
\begin{equation}\label{eq6A}
h\left(\overline{{\mathbb R}^n}\setminus B(0, R)\right)< a/2\,.
\end{equation}

Now we show that any continuum $C$ with $h(C)\ge a$ (see (\ref{chdiam})) contains a subcontinuum $C_1\subset B(0, R)$ with $h(C_1)\ge a/4.$

If $C\subset B(0, R),$ we set $C_1:=C.$ Otherwise,
$C\cap \left(\overline{{\mathbb R}^n}\setminus B(0, R)\right)\ne \varnothing.$
Since $C$ is closed, one can fix a pair $x_0, y_0\in \overline{{\mathbb R}^n}$
on which the chordal diameter $h(C)$ is attained $(h(C)=h(x_0, y_0)).$
Note that $x_0$ and $y_0$ do not simultaneously belong to the complement of $B(0, R)$, because due to (\ref{eq6A}),
$h\left(\overline{{\mathbb R}^n}\setminus B(0, R)\right)< a/2$ and $h(C)\ge a$ as well.

Without loss of generality one can assume that $x_0\in B(0, R).$ There are two possibilities:

\medskip
1) $y_0\in \overline{{\mathbb R}^n}\setminus B(0, R).$ Let $C_2$ be a
connected component of $C\cap \overline{B(0, R)},$ containing $x_0.$
Since $C$ is connected, there exists $z_1\in C_2\cap S(0, R).$ Then
by the triangle inequality
\begin{equation*}
a\le h(x_0, y_0)\le h(x_0, z_1)+h(z_1, y_0)< h(C_2)+a/2\,,
\end{equation*}
and moreover, $h(C_2)> a/2.$ This component $C_2$ can be chosen as the needed set $C_1$.

\medskip
2) $y_0\in B(0, R).$ Let $C_2$ be as above, and
denote by $C_3$ a connected component of $C\cap \overline{B(0, R)},$
containing $y_0.$ By the triangle inequality, for any point $z_2\in
C_3\cap S(0, R),$
\begin{equation*}
a\le h(x_0, y_0)\le h(x_0, z_1)+h(z_1, z_2)+ h(z_2, y_0)< h(C_2) + h(C_3)+ a/2\,.
\end{equation*}
It follows that either $h(C_2)>a/4$ or $h(C_3)> a/4.$
The corresponding $C_k,$ $k=2,3,$ with $h(C_k)> a/4$ is chosen as the prescribed set $C_1$.
So, in either case there exists a continuum $C_1\subset B(0, R)$ with
$h(C_1)\ge a/4.$

Note that by the definition of $p$-capacity given in
(\ref{eq4}), ${\rm cap}_p\,\left(\overline{{\mathbb R}^n}\setminus F,\,
C\right)\ge {\rm cap}_p\,\left(\overline{{\mathbb R}^n}\setminus F,\,
C_1\right);$ so it suffices to estimate the capacity in the right-hand side.

Arguing similarly to \cite{Ric93}, pick $\Gamma_1:=\Gamma\left(F, S(0, 2R), B(0, 2R)\right).$
Since ${\rm cap}_p\,\left(F\setminus\{\infty\}\right)>0$, one derives from Proposition \ref{lem2.2}
that
\begin{equation}\label{eq7A}
\mathcal M_p(\Gamma_1):=\delta_1>0\,,
\end{equation}
with a constant $\delta_1$
depending only on $p,$ $R$ and  $F.$

Now consider a curve family $\Gamma_2=\left(C_1, S(0, 2R), B(0, 2R)\right).$
Its $p$-module $\mathcal M_p(\Gamma_2)$ can be estimated using the equality
${\rm cap}_p\left(B(0, 2R),
C_1\right)=\mathcal M_p(\Gamma_2)$ from Proposition \ref{lem2.2} and the lower bound (\ref{2.5}). This results in
\begin{equation}\label{eq8}
\mathcal M_p(\Gamma_2) \ge \left(c_1\frac{\left(d(C_1)\right)^p}
{\left(2^n\Omega_n R^n\right)^{1-n+p}}\right)^{\frac{1}{n-1}} \ge
\left(c_1\frac{\left(a/4\right)^p} {\left(2^n\Omega_n
R^n\right)^{1-n+p}}\right)^{\frac{1}{n-1}}:=\delta_2\,,
\end{equation}
where $\delta_2$ depends only on $p,$ $R,$ $a$ and $n.$

For
$\Gamma_{1,2}=\Gamma\left(C_1, F, \overline{{\mathbb R}^n}\right),$
Proposition \ref{lem2.2} yields
\begin{equation}\label{eq9AA}
\mathcal M_p(\Gamma_{1,2})= {\rm cap}_p\,\left(\overline{{\mathbb
R}^n}\setminus F,\, C_1\right).
\end{equation}
Pick arbitrary $\rho\in {\rm
adm\,}\Gamma_{1,2}.$ If either $3\rho\in {\rm adm\,}\Gamma_{1}$ or
$3\rho\in  {\rm adm\,}\Gamma_{2},$ then by the relations (\ref{eq7A})
and (\ref{eq8}),
\begin{equation}\label{eq10A}
\int\limits_{{\mathbb R}^n}\rho^p(x)dm(x)\ge 3^{-p}\min\{\delta_1,
\delta_2\}\,.
\end{equation}

If for both $\Gamma_1$ and $\Gamma_2$ we have $3\rho\not\in  {\rm adm\,}\Gamma_{1}$
and  $3\rho\not\in {\rm adm\,}\Gamma_{2},$ then there exists a pair
of curves $\gamma_1\in \Gamma_1$ and $\gamma_2\in \Gamma_2$ for which
\begin{equation}\label{eq11}
\int\limits_{\gamma_1}\rho (x)\ \ |dx|< 1/3,\qquad
\int\limits_{\gamma_2}\rho (x)\ \ |dx|< 1/3\,.
\end{equation}

Now consider a family of curves $\Gamma_4=\Gamma\left(|\gamma_1|,
|\gamma_2|, B(0, 2R)\setminus \overline{B(0, R)}\right),$ where
$|\gamma|$ denotes a locus of curve $\gamma,$ i.e.,
$|\gamma|=\{x\in {\mathbb R}^n:\, \exists\, t: x=\gamma(t)\}.$ Since
$\rho\in {\rm adm\,}\Gamma_{1,2},$ the relations (\ref{eq11}) give that
$\int\limits_{\gamma}\rho (x)|dx|\ge 1/3$ for any
curve $\gamma\in \Gamma_4.$ In this case, $3\rho\in {\rm
adm\,}\Gamma_{4},$ hence by Proposition \ref{pr1},
\begin{equation}\label{eq13A}
\int\limits_{{\mathbb R}^n}\rho^p(x)dm(x)\ge 3^{-p}\mathcal
M_p(\Gamma_4)\ge \delta_3\,,
\end{equation}
where $\delta_3$ depends only on $n,$ $p$ and $R.$ Combining (\ref{eq10A})
and (\ref{eq13A}), one gets the estimate
\begin{equation*}
\mathcal M_p(\Gamma_{1,2})\ge \min\{\delta_1, \delta_2,
\delta_3\}:=\delta\,.
\end{equation*}
Replacing $p$-module by $p$-capacity as in (\ref{eq9AA}), one completes the proof of Lemma \ref{lem1A}.
\end{proof}

\subsection{}
The following statement provides the integral conditions for the majorant $Q$ and admissible metric $\psi,$ which ensure the normality for $(p,Q)$-mapping families omitting the sets of positive $p$-capacity.

\medskip
\begin{lemma}\label{lem3*} 
{\em Let $E \subset\,\overline{{\mathbb R}^n}$ be a compact set with ${\rm cap}_p\,\left(E\setminus\{\infty\}\right)>0,$ $n-1<p<n,$ and
${\frak F_{Q}}$ be a family of open discrete ring $(p,
Q)$-mappings $f:D\rightarrow\overline{{\mathbb R}^n}\setminus E$ at
$x_0\in D.$  Suppose that
\begin{equation} \label{eq27*!}
\int\limits_{\varepsilon<|x-x_0|<\varepsilon_0}Q(x)\psi_{\varepsilon}^p(|x-x_0|)
\ dm(x)=o\left(I^p(\varepsilon, \varepsilon_0)\right)
\end{equation}
as $\varepsilon\rightarrow 0$ and some $0< \varepsilon_0 < {\rm
dist\,}(x_0,
\partial D),$ where $\{\psi_{\varepsilon}(t)\}$ is a family of
nonnegative Lebesgue measurable functions on $(0,\varepsilon_0)$
satisfying
\begin{equation}\label{eq1A}
0<I(\varepsilon,\varepsilon_0)=\int\limits_{\varepsilon}^{\varepsilon_0}\psi_{\varepsilon}(t)dt
< \infty,\qquad\forall\ \ \varepsilon \in(0,
\varepsilon^{\,\prime}_0)\,, \quad 0<\varepsilon^{\,\prime}_0<\varepsilon_0.
\end{equation}
Then the family ${\frak F_{Q}}$ is equicontinuous at $x_0.$}
\end{lemma}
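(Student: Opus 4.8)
The plan is to argue by contradiction, reducing the equicontinuity at $b=x_0$ to the upper capacity estimate of Lemma \ref{lem4} combined with the lower capacity bound of Lemma \ref{lem1A}. If $E=\overline{{\mathbb R}^n}$ the family ${\frak F_Q}$ is empty and the claim is vacuous, so I may assume $E$ is a compact \emph{proper} subset of $\overline{{\mathbb R}^n}$. Suppose ${\frak F_Q}$ is \emph{not} equicontinuous at $x_0$. Then there are $\sigma_0>0$, mappings $f_j\in{\frak F_Q}$ and points $x_j\to x_0$ (necessarily $x_j\ne x_0$) with $h(f_j(x_j),f_j(x_0))\ge\sigma_0$ for all $j$. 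Setting $\varepsilon_j=|x_j-x_0|\to 0$ and discarding finitely many indices, I may assume $\varepsilon_j\in(0,\varepsilon_0^{\,\prime})$.

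For each $j$ I would apply Lemma \ref{lem4} to the condenser $E_j=(A,C_j)$ with $A=B(x_0,r_0)$, $r_0={\rm dist}(x_0,\partial D)$, and $C_j=\overline{B(x_0,\varepsilon_j)}$, exactly the configuration appearing there. Taking the function $\Phi(\varepsilon_j,\varepsilon_0)$ in that lemma to be the integral on the left of (\ref{eq27*!}), the conclusion (\ref{eq3B}) reads
\[
{\rm cap}_p\,f_j(E_j)\le \frac{1}{I^p(\varepsilon_j,\varepsilon_0)}\int\limits_{\varepsilon_j<|x-x_0|<\varepsilon_0}Q(x)\,\psi_{\varepsilon_j}^p(|x-x_0|)\,dm(x),
\]
and by the hypothesis (\ref{eq27*!}) the right-hand side tends to $0$ as $j\to\infty$.

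On the other hand, since $C_j$ is a closed ball, its image $K_j:=f_j(C_j)$ is a continuum (compact and connected, as $f_j$ is continuous), and $K_j\subset\overline{{\mathbb R}^n}\setminus E$ because each $f_j$ maps $D$ into $\overline{{\mathbb R}^n}\setminus E$. As $x_0,x_j\in C_j$, the chordal diameter satisfies $h(K_j)\ge h(f_j(x_j),f_j(x_0))\ge\sigma_0$. Lemma \ref{lem1A}, applied with $F=E$ and $a=\sigma_0$ (legitimate since $E$ is a compact proper subset with ${\rm cap}_p(E\setminus\{\infty\})>0$), then supplies one $\delta>0$, independent of $j$, for which ${\rm cap}_p\bigl(\overline{{\mathbb R}^n}\setminus E,\,K_j\bigr)\ge\delta$.

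Finally I would glue the two estimates using monotonicity of $p$-capacity in its ambient open set: because $f_j(A)\subset\overline{{\mathbb R}^n}\setminus E$, extending by zero any function admissible for the condenser $(f_j(A),K_j)$ yields an admissible function for $(\overline{{\mathbb R}^n}\setminus E,K_j)$, whence ${\rm cap}_p\,f_j(E_j)={\rm cap}_p(f_j(A),K_j)\ge {\rm cap}_p(\overline{{\mathbb R}^n}\setminus E,K_j)\ge\delta$. Combined with the previous paragraph this gives $\delta\le{\rm cap}_p\,f_j(E_j)\to 0$, a contradiction, so ${\frak F_Q}$ is equicontinuous at $x_0$. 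I expect the only delicate points to be checking that $(f_j(A),K_j)$ is a genuine condenser (openness of $f_j(A)$, guaranteed by openness of $f_j$) and justifying the capacity monotonicity in the extended setting $\overline{{\mathbb R}^n}$, where a possible value $\infty$ is handled through the module reformulation of capacity noted after Proposition \ref{lem2.2}; the rest is a direct assembly of Lemmas \ref{lem4} and \ref{lem1A}.
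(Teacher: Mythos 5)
Your proof is correct and follows essentially the same route as the paper's: the upper capacity bound from Lemma \ref{lem4} combined with hypothesis (\ref{eq27*!}), the uniform lower bound from Lemma \ref{lem1A}, and monotonicity of $p$-capacity in the ambient open set. The only difference is that you phrase it as a proof by contradiction while the paper argues directly (for each $a>0$ producing $\varepsilon_*(a)$ with $h(f(\overline{B(x_0,\varepsilon)}))<a$), which is an inessential repackaging.
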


\begin{proof} Consider a condenser ${\mathcal
E}=(A,\,C)$ with $A=B\left(x_0,\,r_0\right)$ and
$C=\overline{B(x_0,\,\varepsilon)},$ where $r_0\,=\,{\rm
dist\,}\left(x_0,\,\partial D\right).$ Assume that $D={\mathbb R}^n,$ i.e. $r_0=\infty.$ Then, by Lemma \ref{lem1A}, for any positive $a>0$ there exists $\delta =\delta(a)$ such that
${\rm cap}_p\,\left(\overline{{\mathbb
R}^n}\setminus E,\, C\right)\ge \delta$. On the other hand,
the estimate (\ref{eq3B}) from Lemma
\ref{lem4} and the asymptotic relation (\ref{eq27*!}) yield
\begin{equation*}
{\rm cap\,}_p\,f\left({\mathcal E}\right)\le \alpha(\varepsilon)\quad\text{for
any}\quad \varepsilon\,\in (0,\,\varepsilon^{\,\prime}_0),
\end{equation*}
with $\alpha(\varepsilon)\,\rightarrow\,0$ as
$\varepsilon\,\rightarrow 0.$ Then, for $\delta=\delta(a)$
there exists $\varepsilon_*=\varepsilon_*(a)$ such that if $\varepsilon\,\in \left(0,\,{\varepsilon_*}(a)\right),$
one gets
\begin{equation}\label{eq28*!}
{\rm cap\,}_p\,f\left({\mathcal E}\right)\le \delta\,.
\end{equation}
By (\ref{eq28*!}),
\begin{equation*}
{\rm cap\,}_p\,\left(\overline{{\mathbb R}^n}\setminus
E,\,f\left(\overline{B(x_0,\,\varepsilon)}\right)\right)\,\le\, {\rm
cap\,}_p\,\left(f\left({B(x_0,\,r_0)}\right)\,,\,
f\left(\overline{B(x_0,\,\varepsilon)}\right)\right)\le\,\delta
\end{equation*}
as $\varepsilon\,\in \left(0,\,{\varepsilon_*}(a)\right).$ Thus, by
 Lemma \ref{lem1A},
$h\left(f\left(\overline{B(x_0,\,\varepsilon)}\right)\right)\,<\,a.$
Finally, for any $a\,>\,0$ there exists
$\varepsilon_*\,=\,\varepsilon_*(a)$ with
$h\left(f\left(\overline{B(x_0,\,\varepsilon)}\right)\right)\,<\,a\,$
provided that $\varepsilon\,\in \left(0,\,{\varepsilon_*}(a)\right).$ This completes
the proof of the lemma.
\end{proof}

\medskip
The following lemma is a stronger statement on normality of
families of open discrete ring $(p,Q)$-mappings. It
shows, in particular, that the assumption of omitting a set of positive
$p$-capacity in Lemma \ref{lem3*} can be dropped.

\medskip
\begin{lemma}\label{lem1} 
{\em Let $Q:D\rightarrow (0, \infty]$ be
a Lebesgue measurable function, and ${\frak F_{Q}}$ be a family of open
discrete ring $(p, Q)$-mappings $f:D\rightarrow{\mathbb R}^n$ at
$x_0\in D,$ $p\in (n-1, n).$ Suppose that the growth condition
(\ref{eq27*!}) holds as $\varepsilon\rightarrow 0$ and some $0<
\varepsilon_0 < {\rm dist\,}(x_0,
\partial D),$ where $\{\psi_{\varepsilon}(t)\}$ is a family of
nonnegative Lebesgue measurable functions on $(0,\varepsilon_0)$
satisfying (\ref{eq1A}).
Then the family ${\frak F_{Q}}$ is equicontinuous at $x_0.$}
\end{lemma}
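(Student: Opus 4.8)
The plan is to reproduce the scheme of Lemma \ref{lem3*} verbatim, replacing its appeal to Lemma \ref{lem1A} by a capacity estimate in which the role of the omitted set of positive $p$-capacity is played by the single point $\infty$. First I would fix $a>0$ and, exactly as in Lemma \ref{lem3*}, form the condenser ${\mathcal E}=(A,C)$ with $A=B(x_0,r_0)$ (or $A={\mathbb R}^n$ when $D={\mathbb R}^n$), $r_0={\rm dist}(x_0,\partial D)$, and $C=\overline{B(x_0,\varepsilon)}$. The growth hypothesis (\ref{eq27*!})--(\ref{eq1A}) together with the estimate (\ref{eq3B}) of Lemma \ref{lem4} gives ${\rm cap}_p f({\mathcal E})\le\alpha(\varepsilon)$ with $\alpha(\varepsilon)\to0$ as $\varepsilon\to0$, uniformly over ${\frak F}_{Q}$. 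Since every $f\in{\frak F}_{Q}$ maps into ${\mathbb R}^n$, the open set $f(A)$ omits $\infty$, so monotonicity of capacity in the first argument yields ${\rm cap}_p\left({\mathbb R}^n,f(C)\right)\le{\rm cap}_p\left(f(A),f(C)\right)={\rm cap}_p f({\mathcal E})\le\alpha(\varepsilon)$.

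The heart of the matter is then the following substitute for Lemma \ref{lem1A}: for every $a>0$ there is $\delta=\delta(a)>0$ such that every continuum $K\subset{\mathbb R}^n$ with chordal diameter $h(K)\ge a$ satisfies ${\rm cap}_p\left({\mathbb R}^n,K\right)\ge\delta$. Granting this, one finishes as in Lemma \ref{lem3*}: choosing $\varepsilon_*=\varepsilon_*(a)$ so small that $\alpha(\varepsilon)<\delta(a)$ for $\varepsilon<\varepsilon_*$, the inequality above forces $h\left(f(\overline{B(x_0,\varepsilon)})\right)<a$ for all such $\varepsilon$ and all $f\in{\frak F}_{Q}$, which is exactly equicontinuity at $x_0$.

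To prove the substitute estimate I would rerun the argument of Lemma \ref{lem1A} with $F=\{\infty\}$. The only place where ${\rm cap}_p(F\setminus\{\infty\})>0$ was used there is to secure the positivity of $\mathcal M_p(\Gamma_1)=:\delta_1$; here this is supplied by the fact that for $n-1<p<n$ the point $\infty$ itself has positive $p$-capacity (cf. \cite[Lemma~2.3]{GSS13}), so that the module of the family $\Gamma_1$ of curves joining $\infty$ to the inner sphere $S(0,R)$ through the exterior is a positive constant depending only on $n$, $p$, $R$. The remaining ingredients are as in Lemma \ref{lem1A}: the module $\mathcal M_p(\Gamma_2)=:\delta_2$ of the curves joining $K$ to $S(0,2R)$ inside $B(0,2R)$ is bounded below by (\ref{2.5}) (which needs only $p>n-1$ and the boundedness of $B(0,2R)$), using that $h(K)\ge a$ and $\infty\notin K$ give $d(K)\ge a$; and splitting an arbitrary $\rho\in{\rm adm}\,\Gamma\left(K,\{\infty\},\overline{{\mathbb R}^n}\right)$ produces, in the remaining case, two curves crossing the ring $A(R,2R,0)$ to which Proposition \ref{pr1} applies and delivers $\delta_3>0$. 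Rewriting $\mathcal M_p\left(\Gamma(K,\{\infty\},\overline{{\mathbb R}^n})\right)$ as ${\rm cap}_p({\mathbb R}^n,K)$ via Proposition \ref{lem2.2} closes the estimate with $\delta=\min\{\delta_1,\delta_2,\delta_3\}$.

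The main obstacle is this substitute estimate, and inside it the bookkeeping that makes $\delta(a)$ uniform over all continua of chordal diameter $\ge a$: since $p$-capacity is not M\"obius invariant for $p\neq n$, one cannot normalise by a conformal map, so I would instead use its translation and dilation behaviour (the capacity scales like $d(K)^{n-p}$), reducing to continua of a fixed Euclidean diameter and choosing $R$ accordingly, whereupon $\delta_1,\delta_2,\delta_3$ become genuine constants. One must also watch the ring configuration in the split-curve step so that both resulting subcurves meet the common ring $A(R,2R,0)$; with $F=\{\infty\}$ the ``outer'' curve approaches this ring from the side of large radii, which is precisely why $\Gamma_1$ has to be taken with the inner sphere $S(0,R)$ rather than the outer sphere used in Lemma \ref{lem1A}.
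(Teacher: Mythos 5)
Your argument is correct, but it takes a genuinely different route from the paper. You extend the omitted-set scheme of Lemma \ref{lem3*} by letting the omitted set be $\{\infty\}$ and proving a version of Lemma \ref{lem1A} with $F=\{\infty\}$; the viability of this rests on the fact that for $n-1<p<n$ the family of curves joining a sphere $S(0,R)$ to $\infty$ through the exterior has positive $p$-module (a radial H\"older estimate gives $\mathcal M_p\ge\omega_{n-1}\left(\tfrac{n-p}{p-1}\right)^{p-1}R^{n-p}$, which is the cited positivity of the $p$-capacity of $\infty$). The continuum reduction to $C_1\subset B(0,R)$, the lower bound (\ref{2.5}) for $\Gamma_2$ (using $d(K)\ge h(K)\ge a$), and the crossing-family step via Proposition \ref{pr1} all go through with the orientation reversed as you note; and since $R$ depends only on $a$, your $\delta(a)$ is automatically uniform over all continua, so the extra scaling discussion is not actually needed. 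The paper instead stays entirely Euclidean and never invokes $\infty$ or the chordal metric: from ${\rm cap}_p\,f(\mathcal E)\le\alpha(\varepsilon)$ it first applies Maz'ya's isocapacitary inequality (Proposition \ref{pr1A}) to get $m\left(f(\overline{B(x_0,\varepsilon)})\right)\to0$, fixes $\varepsilon_1$ with $m\left(f(\overline{B(x_0,\varepsilon_1)})\right)\le1$, and then applies the diameter bound (\ref{2.5}) to the condenser $\left(B(x_0,\varepsilon_0),\overline{B(x_0,\varepsilon)}\right)$ to conclude $d\left(f(\overline{B(x_0,\varepsilon)})\right)\le\alpha_3(\varepsilon)\to0$ uniformly in $f$. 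The paper's route is shorter and delivers a quantitative \emph{Euclidean} diameter decay (the same mechanism reappears in the distortion estimates of Section 4), whereas yours delivers only chordal equicontinuity --- which is, however, exactly what the statement and the normality application require --- and has the conceptual merit of exhibiting the omitted set of positive $p$-capacity demanded by Lemma \ref{lem3*} as being automatically supplied by the point $\infty$ once $p<n$ and the target is ${\mathbb R}^n$.
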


\medskip
\begin{proof} Consider a condenser ${\mathcal
E}=(A,\,C)$ with $A=B\left(x_0,\,r_0\right),$
$C=\overline{B(x_0,\,\varepsilon)},$ where, as usual, $r_0\,=\,{\rm
dist\,}\left(x_0,\,\partial D\right).$ If $D={\mathbb R}^n,$ i.e. $r_0=\infty,$
then the estimate (\ref{eq3B}) and the asymptotic relation (\ref{eq27*!}) again yield
${\rm cap\,}_p\,f\left({\mathcal E}\right)\le \alpha(\varepsilon)$ for
any $\varepsilon\,\in (0,\,\varepsilon^{\,\prime}_0),$
with $\alpha(\varepsilon)\,\rightarrow\,0$ as
$\varepsilon\,\rightarrow 0.$ Applying Proposition \ref{pr1A}, one
obtains
\begin{equation*}
\alpha(\varepsilon)\ge{\rm cap}_p\,f(E)\ge n{\Omega}^{\frac{p}{n}}_n
\left(\frac{n-p}{p-1}\right)^{p-1}\left[m(f(C))\right]^{\frac{n-p}{n}}\,,
\end{equation*}
where ${\Omega}_n$ denotes the volume of the unit ball in ${\mathbb
R}^n,$ and $m(C)$ stands for the $n$-dimensional Lebesgue measure of $C.$
In other words,
\begin{equation*}   
m(f(C))\le \alpha_1(\varepsilon)\,,
\end{equation*}
where $\alpha_1(\varepsilon)\rightarrow 0$ as $\varepsilon\rightarrow
0.$ Choosing appropriate $\varepsilon_1\in
(0, 1),$ one has for $\varepsilon<\varepsilon_1$ a more rough bound
\begin{equation}\label{eqroughb}
m(f(C))\le 1,
\end{equation}
where $C=\overline{B(x_0, \varepsilon_1)}.$

Applying the inequalities
(\ref{eq3B}) and (\ref{eq27*!}) to the condenser
${\mathcal E}_1=(A_1,\,C_{\varepsilon}),$
$A_1=B\left(x_0,\,\varepsilon_0\right),$ and
$C_{\varepsilon}=\overline{B(x_0, \varepsilon)},$ $\varepsilon\in
(0, \varepsilon_1),$  yields
\begin{equation*}   
{\rm cap\,}_p\,f\left({\mathcal E}_1\right)\le \alpha_2(\varepsilon)
\end{equation*}
for all $\varepsilon\in (0,\,\varepsilon^{\,\prime}_0),$
where $\alpha_2(\varepsilon)\rightarrow0$ as $\varepsilon\rightarrow
0,$ and after estimating $p$-capacity from below in the left-hand side by
(\ref{2.5}),
\begin{equation*}
\left(c_1\frac{\left(d(f(\overline{B(x_0, \varepsilon)}))\right)^p}
{\left(m(f(B(x_0,
\varepsilon_1)))\right)^{1-n+p}}\right)^{\frac{1}{n-1}} \le {\rm
cap\,}_p\,f\left({\mathcal E}_1\right)\le \alpha_2(\varepsilon)\,.
\end{equation*}
Combining this with (\ref{eqroughb}), one gets
\begin{equation*}
d(f(\overline{B(x_0, \varepsilon)}))
\le \alpha_3(\varepsilon)\,,
\end{equation*}
where $\alpha_3(\varepsilon)\rightarrow0$ as $\varepsilon\rightarrow
0.$
Since this inequality holds for every $f\in {\frak F_{Q}},$ the
family ${\frak F_{Q}}$ is equicontinuous.
\end{proof}

\subsection{} Let $Q:D\rightarrow [0,\infty]$ be a Lebesgue measurable function. Denote by $q_{x_0}$ the mean value of $Q(x)$ over the sphere $|x-x_0|=r$, that means,
\begin{equation}\label{eq32*}
q_{x_0}(r):=\frac{1}{\omega_{n-1}r^{n-1}}\int\limits_{|x-x_0|=r}Q(x)\,d{\mathcal
H}^{n-1}\,,
\end{equation}
where $\omega_{n-1}$ denotes the area of the unit sphere in $\mathbb R^n$.

The proof of the main results is based on the following statement (for $p=n$ we refer to \cite{Sev10}).

\medskip
\begin{lemma}\label{lem4A} 
{\em Let a function $Q:D\rightarrow [0,\infty]$ be Lebesgue measurable in a domain $D\subset \mathbb R^n,$ $n\ge 2,$ and $x_0\in
D$. Assume that either of the following conditions holds

\noindent (a) $Q\in {\rm FMO}(x_0),$

\noindent (b) $q_{x_0}(r)\,=\,O\left(\left[\log{\frac1r}\right]^{n-1}\right)$ as
$r\rightarrow 0,$

\noindent (c) for some small
$\delta_0=\delta_0(x_0)>0$ we have the relations
\begin{equation}\label{eq5***}
\int\limits_{\delta}^{\delta_0}\frac{dt}{tq_{x_0}^{\frac{1}{n-1}}(t)}<\infty,\qquad 0<\delta<\delta_0,
\end{equation}
and
\begin{equation}\label{eq5**}
\int\limits_{0}^{\delta_0}\frac{dt}{tq_{x_0}^{\frac{1}{n-1}}(t)}=\infty\,.
\end{equation}
Then  there exist a number $\varepsilon_0\in(0,1)$ and a function $\psi(t)\ge 0$ such that the inequalities (\ref{eq27*!}) and (\ref{eq1A})
of Lemma \ref{lem3*} hold at the point $x_0$ for any $p$, $0<p\le n$. }
\end{lemma}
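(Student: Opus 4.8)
The plan is to exhibit, in each of the three cases, an explicit single function $\psi$ (independent of $\varepsilon$, so that one may take $\psi_\varepsilon\equiv\psi$ in Lemmas \ref{lem3*} and \ref{lem1}) together with a radius $\varepsilon_0\in(0,1)$, and then verify (\ref{eq1A}) and (\ref{eq27*!}) by passing to polar coordinates. Throughout I would use the defining identity (\ref{eq32*}) in the form
\begin{equation*}
\int\limits_{\varepsilon<|x-x_0|<\varepsilon_0}Q(x)\,\psi^p(|x-x_0|)\,dm(x)=\omega_{n-1}\int\limits_{\varepsilon}^{\varepsilon_0}r^{n-1}q_{x_0}(r)\,\psi^p(r)\,dr=:J(\varepsilon),
\end{equation*}
where the requirement $\varepsilon_0<1$ guarantees $\log(1/r)>0$ on $(\varepsilon,\varepsilon_0)$. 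The whole problem thus reduces to choosing $\psi$ so that $I(\varepsilon,\varepsilon_0)\to\infty$ while $J(\varepsilon)=o\!\left(I^p(\varepsilon,\varepsilon_0)\right)$.

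For the cases (a) and (b) I would take the logarithmic weight $\psi(t)=1/\bigl(t\log(1/t)\bigr)$, for which $I(\varepsilon,\varepsilon_0)=\log\log(1/\varepsilon)-\log\log(1/\varepsilon_0)\to\infty$, so (\ref{eq1A}) holds. Both hypotheses yield the master estimate
\begin{equation*}
\int\limits_{\varepsilon}^{\varepsilon_0}\frac{q_{x_0}(r)}{r\,(\log(1/r))^n}\,dr=O\!\left(\log\log\tfrac1\varepsilon\right),\qquad \varepsilon\to0 .
\end{equation*}
Under (b) this is immediate from $q_{x_0}(r)\le C(\log(1/r))^{n-1}$ and $\int dr/(r\log(1/r))=\log\log(1/r)$; under (a) it is the known integral lemma for the class ${\rm FMO}(x_0)$ (dyadic-annulus decomposition; see \cite{IR05}). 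Since for $p\le n$ the factor $(r\log(1/r))^{\,n-p}$ is bounded on $(0,\varepsilon_0)$, writing $r^{\,n-1-p}(\log(1/r))^{-p}=(r\log(1/r))^{\,n-p}\cdot r^{-1}(\log(1/r))^{-n}$ gives $J(\varepsilon)=O(\log\log(1/\varepsilon))$. As $p>n-1\ge1$ in the relevant range, $J(\varepsilon)/I^p(\varepsilon,\varepsilon_0)=O\!\left((\log\log(1/\varepsilon))^{1-p}\right)\to0$, which is (\ref{eq27*!}). In case (b) one in fact gets $J(\varepsilon)=O(1)$, since $\int_0^{\varepsilon_0}r^{\,n-1-p}(\log(1/r))^{\,n-1-p}dr<\infty$ for $p<n$.

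For case (c) I would take $\psi(t)=1/\bigl(t\,q_{x_0}^{1/(n-1)}(t)\bigr)$. Then (\ref{eq5***}) gives $I(\varepsilon,\varepsilon_0)<\infty$ for each $\varepsilon>0$ and (\ref{eq5**}) gives $I(\varepsilon,\varepsilon_0)\to\infty$, so (\ref{eq1A}) holds. The point is the algebraic identity $r^{n-1}q_{x_0}(r)\psi^p(r)=\psi^{\,p-n+1}(r)$, whence $J(\varepsilon)=\omega_{n-1}\int_{\varepsilon}^{\varepsilon_0}\psi^{\,p-n+1}(t)\,dt$. Since $n-1<p\le n$ forces the exponent $p-n+1\in(0,1]$, Hölder's inequality with exponents $1/(p-n+1)$ and $1/(n-p)$ yields
\begin{equation*}
\int\limits_{\varepsilon}^{\varepsilon_0}\psi^{\,p-n+1}(t)\,dt\le I^{\,p-n+1}(\varepsilon,\varepsilon_0)\,(\varepsilon_0-\varepsilon)^{\,n-p}\le \varepsilon_0^{\,n-p}\,I^{\,p-n+1}(\varepsilon,\varepsilon_0),
\end{equation*}
so that $J(\varepsilon)/I^p(\varepsilon,\varepsilon_0)\le \omega_{n-1}\varepsilon_0^{\,n-p}\,I^{\,1-n}(\varepsilon,\varepsilon_0)\to0$, establishing (\ref{eq27*!}).

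I expect the main obstacle to lie in case (a): the master estimate for ${\rm FMO}(x_0)$ is the genuinely nontrivial ingredient, resting on splitting $(\varepsilon,\varepsilon_0)$ into dyadic rings and summing the mean oscillations, and if it may not be quoted it would have to be reproved. The remaining delicate points are purely bookkeeping: ensuring $I(\varepsilon,\varepsilon_0)$ is finite for each $\varepsilon>0$ yet divergent as $\varepsilon\to0$ (which is exactly why $\varepsilon_0<1$ and why (\ref{eq5***})--(\ref{eq5**}) are imposed), and tracking that the $o$-relation genuinely uses $p>1$ in cases (a)/(b) and the exponent range $n-1<p\le n$ in case (c).
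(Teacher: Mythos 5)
Your proposal is correct on the range $n-1<p<n$ where the lemma is actually invoked, and it follows the same overall strategy as the paper (Fubini's theorem to reduce everything to a radial integral against $q_{x_0}$, the known FMO integral lemma, and H\"older's inequality); the difference lies in the choice of admissible functions. In cases (a)--(b) the paper takes $\psi(t)=\left(t\log\frac1t\right)^{-n/p}$ and in case (c) $\psi(t)=\left(tq_{x_0}^{1/(n-1)}(t)\right)^{-n/p}$, whereas you drop the exponent $n/p$. For (a)--(b) this is immaterial: both choices give $I(\varepsilon,\varepsilon_0)\gtrsim\log\log\frac1\varepsilon$ and $J(\varepsilon)=O\left(\log\log\frac1\varepsilon\right)$ via the same FMO estimate (the paper quotes it as \cite[Corollary~6.3]{MRSY09}; your radial form is equivalent by Fubini), and both need $p>1$ at the last step --- a restriction the paper's own proof of case (a) also tacitly imposes, harmless since $p>n-1\ge 1$. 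In case (c), however, the two choices are not equivalent in scope: your identity $r^{n-1}q_{x_0}(r)\psi^p(r)=\psi^{p-n+1}(r)$ and the H\"older step with conjugate exponents $\frac{1}{p-n+1}$ and $\frac{1}{n-p}$ require $p-n+1>0$, so your argument proves (c) only for $n-1<p\le n$, while the paper's exponent $n/p$ leads to the H\"older pair $\left(\frac np,\frac{n}{n-p}\right)$ and the bound $J\le\omega_{n-1}(\varepsilon_0-\varepsilon)^{(n-p)/n}I^{p/n}=o(I^p)$, valid for all $0<p\le n$ as the statement claims. A pleasant byproduct of your choice is that your $I(\varepsilon,\varepsilon_0)$ in case (c) is literally the integral in (\ref{eq5***})--(\ref{eq5**}), so positivity and divergence of $I$ are immediate rather than extracted through the H\"older inequality as in (\ref{eq3B***}). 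If you want the full range $0<p\le n$ announced in the lemma, switch to the paper's $\psi$ in case (c); otherwise your argument is complete for every application made of the lemma in this paper.
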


\medskip
\begin{proof} We shall show in the proof that in the case, when the assumption (c) holds, one can choose $\varepsilon_0=\delta_0$.
Without loss of generality, assume that $x_0=0.$

We start with the assumption $Q\in {\rm FMO}$. Due to
\cite[Corollary~6.3, Ch.~6]{MRSY09}, the condition $Q\in
{\rm FMO(0)}$ implies that for some small $\varepsilon<\varepsilon_0$
\begin{equation}\label{eq31*}
\int\limits_{\varepsilon<|x|<\varepsilon_0}Q(x)\cdot\psi^p(|x|)
\ dm(x)\,=\,O
\left(\log\log \frac{1}{\varepsilon}\right)\,,\quad \varepsilon\to 0\,,
\end{equation}
with $\psi(t)=\left(t\,\log{\frac1t}\right)^{-n/p}>0.$
Note that the quantity $I(\varepsilon, \varepsilon_0)$ defined in Lemma \ref{lem3*} is estimated by
\begin{equation}\label{eqlogest}
I(\varepsilon,
\varepsilon_0)=\int\limits_{\varepsilon}^{\varepsilon_0}\psi(t) dt
>\log{\frac{\log{\frac{1}
{\varepsilon}}}{\log{\frac{1}{\varepsilon_0}}}}.
\end{equation}
Thus the estimate (\ref{eq31*}) yields
\begin{equation*}
\frac{1}{I^p(\varepsilon,
\varepsilon_0)}\int\limits_{\varepsilon<|x|<\varepsilon_0}
Q(x)\cdot\psi^p(|x|)  \ dm(x)\le C\left(\log\log\frac{1}{\varepsilon}\right)^{1-p}\rightarrow 0, \quad \varepsilon\to 0\,,
\end{equation*}
which completes the proof for the case (a).

Consider now the case (b), i.e.,
$q_{x_0}(r)\,=\,O\left(\left[\log{\frac1r}\right]^{n-1}\right)$ as
$r\rightarrow 0$ and fix arbitrary $\varepsilon_0$ providing $\varepsilon_0<\min\left\{{\rm
dist}\,\left(0,\partial D\right),1\right\}.$ Letting
$\psi(t)=\left(t\,\log{\frac1t}\right)^{-n/p},$ one can establish that
\begin{equation*}
\begin{split}
\int\limits_{\varepsilon<|x|<\varepsilon_0} Q(x)\cdot\psi^p(|x|)
 \ dm(x)\,&= \int\limits_{\varepsilon\,<\,|x|\,<\,\varepsilon_0}
\frac{Q(x)dm(x)}{\left(|x|\log{\frac{1}{|x|}}\right)^n}\,
\\&
=\,\int\limits_{\varepsilon}^{\varepsilon_0}
\left(\,\int\limits_{|x|\,=\,r}\frac{Q(x)}{\left(|
x|\log{\frac{1}{|x|}}\right)^n}\, d{\mathcal
H}^{n-1}\,\right)\,dr
\\&\le
C\omega_{n-1}\,\int\limits_{\varepsilon}^{\varepsilon_0}\,
\frac{dr}{r\log{\frac1r}}=C\omega_{n-1}\log{\frac{\log{\frac{1}
{\varepsilon}}}{\log{\frac{1}{\varepsilon_0}}}}\,\le\,C\omega_
{n-1}\cdot I(\varepsilon, \varepsilon_0)\,,
\end{split}
\end{equation*}
where $I(\varepsilon,
\varepsilon_0)$ is the same as above and tends to $\infty$ as $\varepsilon\to 0.$ This implies
\begin{equation*}
\frac{1}{I^p(\varepsilon,
\varepsilon_0)}\int\limits_{\varepsilon<|x|<\varepsilon_0}
Q(x)\cdot\psi^p(|x|)
 \ dm(x)\,\rightarrow 0\quad \text{as}\quad \varepsilon\rightarrow 0,
\end{equation*}
and thereby the assertion of the lemma.

In the case (c), letting
$\varepsilon_0=\delta_0$ and $\varepsilon=\delta<\varepsilon_0,$
consider the function
$I(\varepsilon, \varepsilon_0)=\int\limits
_{\varepsilon}^{\varepsilon_0}\psi(t)dt$ with
\begin{equation}\label{eq44**}
\psi(t)\quad=\quad \left \{\begin{array}{rr}
\left(1/[tq^{\frac{1}{n-1}}_{0}(t)]\right)^{n/p}\ , & \ t\in
(\varepsilon, \varepsilon_0)\ ,
\\ 0\ ,  &  \ t\notin (\varepsilon,
\varepsilon_0)\ ,
\end{array} \right.
\end{equation}
and $q_0(r)=q_{{x_0}}(r),$ $x_0=0$ (as usual,
$a/\infty = 0$ for $a\ne\infty,$ $a/0=\infty $ for $a>0$ and
$0\cdot\infty =0,$ cf. \cite[Ch.~I]{Sak64}).
It follows from (\ref{eq5***}) that $I(\varepsilon, \varepsilon_0)<\infty$.
For $p\le n,$ the H\"older inequality yields
\begin{equation}\label{eq3B***}
\int\limits_{\varepsilon}^{\varepsilon_0}\frac{dt}{tq_{0}^{\frac{1}{n-1}}(t)}\le
\left(\int\limits_{\varepsilon}^{\varepsilon_0}\left(
\frac{1}{tq_{0}^{\frac{1}{n-1}}(t)} \right)^{n/p}
dt\right)^{p/n}\cdot
\left(\varepsilon_0-\varepsilon\right)^{(n-p)/n}\,.
\end{equation}
Thus, the equality (\ref{eq5**}) implies that $I(\varepsilon,
\varepsilon_0)>0$ for some $\varepsilon_1\in (0, \varepsilon_0)$ and any $\varepsilon\in(0, \varepsilon_1)$.
In addition, $\psi$ satisfies (\ref{eq27*!}). Actually, applying Fubini's theorem immediately derives
\begin{equation}\label{eq26}
\int\limits_{\varepsilon<|x|<\varepsilon_0} Q(x)\cdot\psi^p(|x|)\
dm(x)\ =\ \omega_{n-1}\cdot
\int\limits_{\varepsilon}^{\varepsilon_0}\frac{dt}{tq_{0}^{\frac{1}{n-1}}(t)};
\end{equation}
hence by (\ref{eq5**}) and (\ref{eq3B***}),
\begin{equation*}
\int\limits_{\varepsilon}^{\varepsilon_0}\frac{dt}{tq_{0}^{\frac{1}{n-1}}(t)}
=o\left(I^p(\varepsilon,\varepsilon_0)\right).
\end{equation*}
Lemma \ref{lem4A} is proved completely.
\end{proof}

\medskip
In the next section we shall apply the following useful statement.

\medskip
\begin{lemma}\label{pr2} 
{\sl Let $f:D\rightarrow {\mathbb R}^n$ be a discrete open $(p,
Q)$-mapping at a point $x_0$ and $p\in (n-1, n).$ Then (\ref{eq5***})
holds for every $0<\delta<\delta_0$ and $\delta_0\in (0, {\rm
dist\,}(x_0,
\partial D)).$}
\end{lemma}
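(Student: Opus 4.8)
The plan is to sandwich the $p$-module of the image family $f\left(\Gamma(S_\delta,S_{\delta_0},A)\right)$, where $A=A(\delta,\delta_0,x_0)$, $S_\delta=S(x_0,\delta)$, $S_{\delta_0}=S(x_0,\delta_0)$ and $0<\delta<\delta_0<{\rm dist}(x_0,\partial D)$ are fixed, between a strictly positive constant from below and the module integral from above, and then read off the required integrability. For the lower bound I would take the condenser $E'=\left(B(x_0,\delta_0),\overline{B(x_0,\delta)}\right)$; since $f$ is open and discrete, $f(E')=\left(f(B(x_0,\delta_0)),f(\overline{B(x_0,\delta)})\right)$ is a condenser, and the same lifting argument that yields (\ref{eq5aa}) in the proof of Lemma~\ref{lem4}, now carried out with outer ball $B(x_0,\delta_0)$ in place of $B(x_0,r_0)$, gives ${\rm cap}_p\,f(E')\le \mathcal M_p\left(f(\Gamma(S_\delta,S_{\delta_0},A))\right)$. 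Because $f$ is open, $f(B(x_0,\delta))$ is a nonempty open set, so $m(f(\overline{B(x_0,\delta)}))\ge m(f(B(x_0,\delta)))>0$, and Proposition~\ref{pr1A} produces a strictly positive bound
\[
\mathcal M_p\left(f(\Gamma(S_\delta,S_{\delta_0},A))\right)\ge {\rm cap}_p\,f(E')\ge n\Omega_n^{p/n}\left(\frac{n-p}{p-1}\right)^{p-1}\left[m(f(B(x_0,\delta)))\right]^{\frac{n-p}{n}}=:m_0>0.
\]

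For the upper bound I would use the defining inequality (\ref{eq3*!}) of a ring $(p,Q)$-mapping: for every measurable $\eta:(\delta,\delta_0)\to[0,\infty]$ with $\int_\delta^{\delta_0}\eta\,dt\ge1$, Fubini's theorem together with the definition (\ref{eq32*}) of $q_{x_0}$ gives $\mathcal M_p\left(f(\Gamma(S_\delta,S_{\delta_0},A))\right)\le\int_A Q\,\eta^p(|x-x_0|)\,dm=\int_\delta^{\delta_0}w(t)\eta^p(t)\,dt$, where $w(t):=\omega_{n-1}q_{x_0}(t)\,t^{n-1}$. Combined with the previous step this yields $\int_\delta^{\delta_0}w\eta^p\,dt\ge m_0$ for every admissible $\eta$. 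Now I exploit near-optimal test functions. Suppose $\int_\delta^{\delta_0}w^{-1/(p-1)}\,dt=\infty$. If $w$ vanishes on a set $S\subset(\delta,\delta_0)$ of positive measure, then $\eta=\mathbf 1_S/|S|$ is admissible and forces $\int w\eta^p=0<m_0$, a contradiction; otherwise $w>0$ a.e. and the normalized truncations of $w^{-1/(p-1)}$ are admissible $\eta_k$ with $\int w\eta_k^p=\big(\int_{\{w^{-1/(p-1)}\le k\}}w^{-1/(p-1)}\big)^{-(p-1)}\to0$, again contradicting the lower bound $m_0$. Hence
\[
\int_\delta^{\delta_0}w(t)^{-1/(p-1)}\,dt<\infty,\qquad\text{i.e.}\qquad\int_\delta^{\delta_0}\left[q_{x_0}(t)\,t^{n-1}\right]^{-1/(p-1)}\,dt<\infty.
\]

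It remains to pass from the exponent $1/(p-1)$ to the exponent $1/(n-1)$ demanded by (\ref{eq5***}). On the compact interval $[\delta,\delta_0]$, bounded away from $0$ and $\infty$, the weight $t^{-(n-1)/(p-1)}$ is pinched between positive constants, so the last display is equivalent to $\int_\delta^{\delta_0}q_{x_0}^{-1/(p-1)}\,dt<\infty$; in particular $q_{x_0}=0$ only on a null set. Since $n-1<p<n$ forces $p-1<n-1$, hence $\tfrac1{p-1}>\tfrac1{n-1}$, I split $[\delta,\delta_0]$ according to $\{q_{x_0}\ge1\}$ and $\{0<q_{x_0}<1\}$: on the first set $q_{x_0}^{-1/(n-1)}\le1$, and on the second $q_{x_0}^{-1/(n-1)}\le q_{x_0}^{-1/(p-1)}$. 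Thus $\int_\delta^{\delta_0}q_{x_0}^{-1/(n-1)}\,dt<\infty$, and since $1/t\le1/\delta$ on $[\delta,\delta_0]$ I conclude $\int_\delta^{\delta_0}\frac{dt}{t\,q_{x_0}^{1/(n-1)}(t)}<\infty$, which is exactly (\ref{eq5***}).

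The crux of the argument is the strictly positive lower bound $m_0$ on the image module; it is what upgrades the purely qualitative module inequality into genuine integrability of $q_{x_0}^{-1/(n-1)}$, and it rests squarely on the openness of $f$ (ensuring $m(f(B(x_0,\delta)))>0$) together with the volume capacity estimate of Proposition~\ref{pr1A}. The only genuinely $p$-sensitive subtlety is the mismatch between the exponent $1/(p-1)$ naturally produced by the extremal radial metric and the exponent $1/(n-1)$ occurring in (\ref{eq5***}); this is precisely where the hypothesis $p<n$ enters, via the elementary case split in the last paragraph.
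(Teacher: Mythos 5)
Your argument is correct, and it shares the paper's overall skeleton: squeeze the $p$-capacity of the image condenser between a positive lower bound forced by openness and an upper bound controlled by $\widetilde I:=\int_\delta^{\delta_0}r^{-\frac{n-1}{p-1}}q_{x_0}^{-\frac{1}{p-1}}(r)\,dr$, conclude $\widetilde I<\infty$, and then trade the exponent $\frac{1}{p-1}$ for $\frac{1}{n-1}$. The two pillars, however, are built differently. For the upper bound the paper simply quotes \cite[Lemma~1]{SS12}, namely ${\rm cap}_p\,f(E)\le\omega_{n-1}/\widetilde I^{\,p-1}$; your combination of the lifting inequality ${\rm cap}_p\,f(E')\le \mathcal M_p\bigl(f(\Gamma(S_\delta,S_{\delta_0},A))\bigr)$ (a legitimate rerun of the proof of Lemma \ref{lem4} with outer ball $B(x_0,\delta_0)$, exactly as the paper itself does inside Lemma \ref{lem1}), Fubini, and near-extremal radial metrics re-derives that estimate from scratch --- your $\bigl(\int w^{-1/(p-1)}\bigr)^{-(p-1)}$ is $\omega_{n-1}/\widetilde I^{\,p-1}$ up to the normalizing constant. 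For positivity the paper argues by contradiction through \cite[Theorem~1.15, Ch.~VII]{Ric93} (zero $p$-capacity would give $f(E)$ topological dimension zero, hence empty interior, contradicting openness), whereas you get a quantitative $m_0>0$ from Proposition \ref{pr1A} together with $m(f(B(x_0,\delta)))>0$ --- more elementary, and entirely inside the paper's own toolbox. Lastly, the paper's single H\"older inequality with exponents $\frac{n-1}{p-1}$ and $\frac{n-1}{n-p}$ is replaced by your two elementary reductions (absorbing $t^{-\frac{n-1}{p-1}}$ into constants, valid since $\delta>0$, and splitting on the set where $q_{x_0}\ge 1$); both are fine on the fixed interval $[\delta,\delta_0]$ at hand.
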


\medskip
\begin{proof}
Consider a condenser $E=\left(B(x_0, r_2),
\overline{B(x_0, r_1)}\right),$ $0<r_1<r_2< {\rm dist} \,
(x_0,\partial D),$ and denote
\begin{equation*}
\widetilde{I}:=\widetilde{I}(x_0,r_1,r_2)=\int\limits_{r_1}^{r_2}\
\frac{dr}{r^{\frac{n-1}{p-1}}q_{x_0}^{\frac{1}{p-1}}(r)}\,.
\end{equation*}
We show that this integral is convergent.

Due to \cite[Lemma~1]{SS12}, $p$-capacity of the condenser $f(E)=\left(f\left(B(x_0, r_2)\right), f\left(\overline{B(x_0,
r_1)}\right)\right)$ is estimated by
\begin{equation}\label{eq2A}
{\rm cap}_p\, f(E)\,\le\frac{\omega_{n-1}}{\widetilde{I}^{p-1}}.
\end{equation}
Now observe that $\widetilde{I}<\infty.$ Indeed, were $\widetilde{I}=\infty$ one obtains from (\ref{eq2A}) and from
\cite[Theorem~1.15, Ch.~VII]{Ric93} that ${\rm dim\,}f(E)=0,$ thus ${\rm Int} f(E)=\varnothing.$ But this
contradicts to the openness of $f.$

By the H\"{o}lder inequality
\begin{equation*}
\int\limits_{r_1}^{r_2}\frac{dt}{tq^{1/(n-1)}_{x_0}(t)}\le \widetilde{I}^{\frac{p-1}{n-1}}\cdot
(r_2-r_1)^{\frac{n-p}{n-1}}\,<\,\infty,
\end{equation*}
which completes the proof.
\end{proof}

\section{Main results}

Based on Lemmas \ref{lem4}--\ref{pr2} from the previous section, we are now able to prove the main results of the paper starting with the proof of Theorem \ref{th1}.

\medskip
\begin{proof}[Proof of Theorem \ref{th1}]
The proof follows from Lemmas \ref{lem1} and \ref{lem4A} using the admissible function
\begin{equation*}
\psi(t)=\frac{1}{\left(t\,\log{\frac1t}\right)^{n/p}}.
\end{equation*}
\end{proof}

\medskip
We also have

\begin{theo}\label{th2} 
{\em Let $Q:D\rightarrow(0, \infty]$ be a Lebesgue measurable
function, $x_0\in D$, $n-1<p<n,$ and let $\frak{F}_{p, Q}(b)$ be a
family of all open discrete $(p, Q)$-mappings $f:D\rightarrow {\Bbb
R}^n$ at $b.$ Assume that
$q_{x_0}(r)=O\left(\left[\log{\frac1r}\right]^{n-1}\right)$ as
$r\rightarrow 0,$ or more generally, that for some $\delta_0,$
$0<\delta_0<{\rm dist\,}(x_0,\partial D),$ and all $\delta\in
(0,\delta_0)$ the relation (\ref{eq5**}) holds. Then the family
$\frak{F}_{p, Q}(b)$ is equicontinuous at $b.$}
\end{theo}

\medskip
\begin{proof}
If $q_{x_0}(r)\,=\,O\left(\left[\log{\frac1r}\right]^{n-1}\right),$ one can take
\begin{equation}\label{eqadmf}
\psi(t)=\frac{1}{\left(t\,\log{\frac1t}\right)^{n/p}},
\end{equation}
and then the conclusion of the theorem follows from Lemmas \ref{lem1} and
\ref{lem4A}.

If instead, we have more general assumptions on $Q$, the same Lemmas
\ref{lem1} and \ref{lem4A} and (\ref{eqadmf}) also are applicable, because the inequality (\ref{eq5***}) holds by
Lemma \ref{pr2} and (\ref{eq5**}) holds by assumption.
\end{proof}

It is well known that ${\rm FMO}\not\subset L^p$ for any $p>1$ and $L^p\not\subset {\rm FMO}$ for any sufficiently large parameter $p$; see the examples in \cite{MRSY09}. It would be interesting to find a relation between the degree of local integrability of the function $Q$ and equicontinuity for the ring $(p,Q)$--mappings. Note that in the case $p=n,$ any positive degree of local integrability of $Q$ does not imply neither equicontinuity nor normality of the family of such mappings; cf. \cite{MRSY09}.

\medskip
For $p\ne n$ we have the following

\begin{theo}\label{th3} 
{\em Let $Q:D\rightarrow (0, \infty]$ be a Lebesgue measurable
function, ${\frak F_{Q}}$ be a family of open discrete ring $(p,
Q)$-mappings $f:D\rightarrow{\Bbb R}^n$ at $b\in D,$ $p\in (n-1,
n).$ Assume that that $Q\in L^s(D),$ with $s\ge\frac{n}{n-p}.$ Then
${\frak F_{Q}}$ is equicontinuous at $b.$}
\end{theo}

\medskip
\begin{proof}
Pick $\varepsilon_0<{\rm dist}\,(x_0,\partial D)$ and $\psi(t)=1/t$ as in Lemma \ref{lem3*}.
This function satisfies (\ref{eq1A}), thus it remains to establish the estimate (\ref{eq27*!}).

The H\"older inequality yields
\begin{equation*}
\int\limits_{\varepsilon<|x-b|<\varepsilon_0}\frac{Q(x)}{|x-b|^p} \
dm(x)\leq \left(\int\limits_{\varepsilon<|x-b|<\varepsilon_0}\frac{1}{|x-b|^{pq}}
\ dm(x) \right)^{\frac{1}{q}}\,\left(\int\limits_{D} Q^{q^{\prime}}(x)\
dm(x)\right)^{\frac{1}{q^{\prime}}}.
\end{equation*}
where  $1/q+1/q^{\prime}=1$. Note that the first integral in the right-hand side can be calculated explicitly. And the calculation is split into two cases.

Taking $q^{\prime}=\frac{n}{n-p}$ (and hence $q=\frac{n}{p},$) and using Fubini's theorem, one gets
\begin{equation*}
\int\limits_{\varepsilon<|x-b|<\varepsilon_0}\frac{1}{|x-b|^{pq}}
\ dm(x)=\omega_{n-1}\int\limits_{\varepsilon}^{\varepsilon_0}
\frac{dt}{t}=\omega_{n-1}\log\frac{\varepsilon_0}{\varepsilon}\,.
\end{equation*}
Then the first integral in Lemma \ref{lem3*} is estimated by
\begin{equation*}
\frac{1}{I^p(\varepsilon, \varepsilon_0)}\int\limits_{\varepsilon<|x-b|<\varepsilon_0}\frac{Q(x)}{|x-b|^p}
\ dm(x)\le
\omega^{\frac{p}{n}}_{n-1}\|Q\|_{\frac{n}{n-p}}\left(\log\frac{\varepsilon_0}{\varepsilon}\right)
^{-p+\frac{p}{n}}\,\to 0\,,
\end{equation*}
as $\varepsilon\rightarrow 0,$ which is equivalent to (\ref{eq27*!}).

\medskip
Let now $q^{\prime}>\frac{n}{n-p}$ (and $q=\frac{q^{\prime}}{q^{\prime}-1}$). In this case,
\begin{equation*}
\int\limits_{\varepsilon<|x-b|<\varepsilon_0}\frac{1}{|x-b|^{pq}}
\ dm(x) = \omega_{n-1}\int\limits_{\varepsilon}^{\varepsilon_0}
t^{n-\frac{pq^{\prime}}{q^{\prime}-1}-1}dt\le
\omega_{n-1}\int\limits_{0}^{\varepsilon_0} t^{n-\frac{pq^{\prime}}{q^{\prime}-1}-1}dt
=\frac{\omega_{n-1}}{n-\frac{pq^{\prime}}{q^{\prime}-1}}\varepsilon^{n-\frac{pq^{\prime}}{q^{\prime}-1}}_0,
\end{equation*}
and
\begin{equation*}
\frac{1}{I^p(\varepsilon, \varepsilon_0)}
\int\limits_{\varepsilon<|x-b|<\varepsilon_0}\frac{Q(x)}{|x-b|^p} \
dm(x)\le \|Q\|_{q^{\prime}}\left(\frac{\omega_{n-1}}{n-\frac{pq^{\prime}}{q^{\prime}-1}}
\varepsilon^{n-\frac{pq^{\prime}}{q^{\prime}-1}}_0\right)^{\frac{1}{q}}\left(\log\frac{\varepsilon_0}{\varepsilon}\right)^{-p}\,,
\end{equation*}
which implies the assumptions (\ref{eq27*!})--(\ref{eq1A}) of Lemma \ref{lem3*}.
Now the assertion of the theorem follows from Lemma \ref{lem1}.
\end{proof}

The following theorem provides another sufficient condition for equicontinuity of  discrete open $(p,Q)$-mappings.

\medskip
\begin{theo}\label{th4}  
{\em Let $Q:D\rightarrow (0, \infty]$ be a Lebesgue measurable
function, ${\frak F_{Q}}$ be a family of open discrete ring $(p,
Q)$-mappings $f:D\rightarrow{\Bbb R}^n$ at $b\in D,$ $p\in (n-1,
n).$ Assume that there exists $0<\varepsilon_0<{\rm dist\,}\left(b,
\partial D\right)$ such that
\begin{equation*}
\int\limits_{0}^{\varepsilon_0}\ \frac{dr}{r^{\frac{n-1}{p-1}}
q_{b}^{\frac{1}{p-1}}(r)}=\infty\,,
\end{equation*}
where $q_{b}(r)$ is defined by (\ref{eq32*}). Then ${\frak F_{Q}}$
is equicontinuous at $b.$}
\end{theo}

\medskip
\begin{proof}
Applying again (\ref{eq2A}), one derives similar to the proof of Lemma \ref{pr2}, that
\begin{equation*}
\int\limits_{\varepsilon}^{\varepsilon_0}\
\frac{dr}{r^{\frac{n-1}{p-1}} q_{b}^{\frac{1}{p-1}}(r)}<\infty,
\qquad 0<\varepsilon<\varepsilon_0.
\end{equation*}

Pick
\begin{equation*}   
\psi(t)= \left \{\begin{array}{rr}
1/[t^{\frac{n-1}{p-1}}q_{b}^{\frac{1}{p-1}}(t)]\ , & \ t\in
(\varepsilon,\varepsilon_0)\ ,
\\ 0\ ,  &  \ t\notin (\varepsilon,\varepsilon_0)\ ,
\end{array} \right.
\end{equation*}
and consider the corresponding integral
(\ref{eq1A}),
which satisfies
$0<I(\varepsilon, \varepsilon_0)<\infty$ for some $\varepsilon_1\in
(0, \varepsilon_0)$ and all $\varepsilon\in (0, \varepsilon_1).$
Using the Fubini theorem, one gets
\begin{equation*}
\int\limits_{\varepsilon<|x-b|<\varepsilon_0}Q(x)\cdot\psi^p(|x-b|)
\ dm(x)=\omega_{n-1}\,\int\limits_{\varepsilon}^{\varepsilon_0}\
\frac{dr}{r^{\frac{n-1}{p-1}} q_{b}^{\frac{1}{p-1}}(r)}\,,
\end{equation*}
and now the assertion of Theorem \ref{th4} follows from Lemma \ref{lem1}.
\end{proof}

\medskip

\begin{corol}\label{cor1*}
{\em The assertion of Theorem \ref{th4} holds if $q_b(t)\le
ct^{p-n}$ for some constant $c>0$ and almost all $t\in (0,
\varepsilon_0),$ providing that $\varepsilon_0<{\rm dist\,}(b, \partial D)$ is sufficiently small.}
\end{corol}

\section{Distortion estimates for bounded ring $(p, Q)$-mappings}

In this section we establish somewhat explicit
estimates.

\medskip
\begin{lemma}\label{lem6} 
{\em
Let $f:D\rightarrow{\mathbb R}^n,$ $n\ge 2$ be a discrete open ring
$(p, Q)$-map\-ping at a point $x_0\in D,$ $D^{\,\prime}:=f(D)\subset
B(0, r).$ Suppose that there exist numbers $q \le p,$ $\varepsilon_0
\in (0,\, {\rm dist}\,(x_0,
\partial D)),$ $\varepsilon_0^{\,\prime}\in (0, \varepsilon_0)$ and nonnegative
Lebesgue measurable functions $\psi_{\varepsilon}:(\varepsilon,
\varepsilon_0)\rightarrow [0, \infty],$ $\varepsilon\in\left(0,
\varepsilon_0^{\,\prime}\right)$ such that
\begin{equation} \label{eq3.7A}
\int\limits_{\varepsilon<|x-x_0|<\varepsilon_0}Q(x)\cdot\psi_{\varepsilon}^p(|x-x_0|)
\ dm(x)\le K\cdot I^q(\varepsilon, \varepsilon_0)\qquad \forall\,\,
\varepsilon\in(0,\varepsilon_0^{\,\prime})\,,
\end{equation}
where $I(\varepsilon, \varepsilon_0)$ is defined by
(\ref{eq3}). Then
\begin{equation*}
|f(x)-f(x_0)|\le C r^{\frac{(1-n+p)n}{p}}K^{\frac{n-1}{p}}
I^{\frac{(q-p)(n-1)}{p}}(|x-x_0|, \varepsilon_0)\,.
\end{equation*}
for every $x\in B(x_0,{\varepsilon_0}^{\,\prime}),$ where $C$ is
a constant depending only on $n$ and $p.$}
\end{lemma}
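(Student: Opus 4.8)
The plan is to squeeze the $p$-capacity of a suitable condenser between the upper bound coming from the ring $(p,Q)$-condition together with the hypothesis (\ref{eq3.7A}), and the volume-based lower estimate (\ref{2.5}). Fix a point $x\in B(x_0,\varepsilon_0^{\,\prime})$ with $x\ne x_0$ and set $\varepsilon:=|x-x_0|\in(0,\varepsilon_0^{\,\prime})$. I would take the condenser $E=(A,C)$ with $A=B(x_0,r_0)$, $r_0={\rm dist}\,(x_0,\partial D)$, and $C=\overline{B(x_0,\varepsilon)}$, exactly as in Lemma \ref{lem4}. Since both $x_0$ and $x$ lie in $C$, the image condenser satisfies $|f(x)-f(x_0)|\le d\left(f(C)\right)$, and since $f(A)\subset f(D)=D^{\,\prime}\subset B(0,r)$ we also have $m(f(A))\le\Omega_n r^n$. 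These two geometric observations are what will convert a capacity bound into the desired pointwise estimate.

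For the upper bound I would not quote the statement of Lemma \ref{lem4} verbatim, since its hypothesis that the majorant $\Phi$ be bounded is unavailable here ($K\,I^q(\varepsilon,\varepsilon_0)$ need not stay bounded as $\varepsilon\to0$); instead I would rerun the capacity estimate inside its proof. Combining (\ref{eq5aa}) with (\ref{eq8***}), which rely only on the lifting argument and the definition of a ring $(p,Q)$-mapping and not on any boundedness, yields
\begin{equation*}
{\rm cap}_p\,f(E)\le\frac{1}{I^p(\varepsilon,\varepsilon_0)}\int\limits_{\varepsilon<|x-x_0|<\varepsilon_0}Q(x)\,\psi_{\varepsilon}^p(|x-x_0|)\,dm(x).
\end{equation*}
Feeding in the hypothesis (\ref{eq3.7A}) then gives the clean bound ${\rm cap}_p\,f(E)\le K\,I^{q-p}(\varepsilon,\varepsilon_0)$.

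Finally I would apply the lower estimate (\ref{2.5}) to the condenser $f(E)=(f(A),f(C))$ in ${\mathbb R}^n$. Using $d(f(C))\ge|f(x)-f(x_0)|$ and $m(f(A))\le\Omega_n r^n$ together with $1-n+p>0$ (so that both substitutions only decrease the lower-bound expression, keeping it a valid lower bound), I obtain
\begin{equation*}
\left(c_1\frac{|f(x)-f(x_0)|^p}{(\Omega_n r^n)^{1-n+p}}\right)^{\frac{1}{n-1}}\le{\rm cap}_p\,f(E)\le K\,I^{q-p}(\varepsilon,\varepsilon_0).
\end{equation*}
Raising both sides to the power $n-1$, isolating $|f(x)-f(x_0)|^p$, and taking $p$-th roots yields exactly the asserted inequality with $C=c_1^{-1/p}\,\Omega_n^{(1-n+p)/p}$, a constant depending only on $n$ and $p$; here one recalls $\varepsilon=|x-x_0|$, so that $I(\varepsilon,\varepsilon_0)=I(|x-x_0|,\varepsilon_0)$. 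The only genuinely delicate points are the exponent bookkeeping just described, namely tracking the sign of $1-n+p$ (positive precisely because $p>n-1$) so that the two geometric substitutions push the inequality in the correct direction, and the observation that the boundedness hypothesis in Lemma \ref{lem4} may be bypassed by working directly with its internal capacity estimate.
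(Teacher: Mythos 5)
Your proposal is correct and follows essentially the same route as the paper: the same condenser $E=(B(x_0,r_0),\overline{B(x_0,\varepsilon)})$, the upper capacity bound ${\rm cap}_p f(E)\le K I^{q-p}(\varepsilon,\varepsilon_0)$ from the ring $(p,Q)$-inequality, the lower bound (\ref{2.5}) with $m(f(A))\le\Omega_n r^n$, and the same algebra yielding $C=c_1^{-1/p}\Omega_n^{(1-n+p)/p}$. Your remark about bypassing the boundedness hypothesis on $\Phi$ in Lemma \ref{lem4} by invoking its internal estimates (\ref{eq5aa})--(\ref{eq8***}) is a legitimate point of extra care; the paper simply substitutes $\Phi:=K\,I^q$ into (\ref{eq3B}) without comment.
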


\medskip
\begin{proof}
Letting in (\ref{eq3B}) $\Phi(\varepsilon,
\varepsilon_0):=K\cdot I^q(\varepsilon, \varepsilon_0),$ one obtains from (\ref{eq3.7A}),
\begin{equation}\label{eq3a}
{\rm cap}_p\,f(E)\le K\cdot I^{q-p}\left(\varepsilon,
\varepsilon_0\right)\,.
\end{equation}

Since $f(A)\subset B(0, r),$ the bound (\ref{2.5}) yields
\begin{equation}\label{eq17}
{\rm cap}_p\,f(E)\ge \left(c_1\frac{\left(d(f(C))\right)^p}
{\left(m(f(A))\right)^{1-n+p}}\right)^{\frac{1}{n-1}}\ge
\left(c_1\frac{\left(d(f(C))\right)^p}
{\left(\Omega_nr^{n})\right)^{1-n+p}}\right)^{\frac{1}{n-1}}\,.
\end{equation}
It follows from (\ref{eq17}) and  (\ref{eq3a}) that
\begin{equation}\label{eq20}
d\left(f(C)\right)\le\left(\frac{1}{c_1}\right)^{1/p}
\Omega_n^{\frac{1-n+p}{p}}r^{\frac{(1-n+p)n}{p}}K^{\frac{n-1}{p}}
I^{\frac{(q-p)(n-1)}{p}}(\varepsilon, \varepsilon_0)\,.
\end{equation}
Now let $x\in D$ be such that $|x-x_0|=\varepsilon,$ $0<
\varepsilon<\varepsilon_0^{\,\prime}.$ Then,
$x\in\overline{B\left(x_0,\,\varepsilon\right)}$ and
$f(x)\in\,f\left(\overline{B\left(x_0,\,\varepsilon\right)}\right)=f(C),$
and from (\ref{eq20}) we get the estimate
\begin{equation}\label{eq21}
|f(x)-f(x_0)|\le\left(\frac{1}{c_1}\right)^{1/p}
\Omega_n^{\frac{1-n+p}{p}}r^{\frac{(1-n+p)n}{p}}K^{\frac{n-1}{p}}
I^{\frac{(q-p)(n-1)}{p}}(|x-x_0|, \varepsilon_0)\,.
\end{equation}
Since
$\varepsilon\in\left(0,\varepsilon_0^{\,\prime}\right)$ is chosen arbitrary, the
relation (\ref{eq21}) holds in the whole ball $B(x_0,
\varepsilon_0^{\,\prime}).$
\end{proof}

\medskip
\begin{theo}\label{th6} 
{\em Let $f:D\rightarrow{\mathbb R}^n,$ $n\ge 2$ be an open discrete ring
$(p, Q)$-map\-ping at a point $x_0\in D,$ and $D^{\,\prime}=f(D)\subset
B(0, r).$ If $Q\in {\rm FMO}(x_0),$ then 
\begin{equation*}
|f(x)-f(x_0)|\le C(p, n, x_0, r)
\left[\log\log\frac{1}{|x-x_0|}\right]^{\frac{(1-p)(n-1)}{p}}
\end{equation*}
for some constant $C(p, n, x_0, r)$ depending only on $p,$ $n,$
$x_0$ and $r,$ and every $x\in B(x_0,{\varepsilon_0}^{\,\prime})$
with $0<{\varepsilon_0}^{\,\prime}<{\rm dist}\,(x_0, \partial D).$
}
\end{theo}

\medskip
\begin{proof}
This follows from Lemma \ref{lem6}, letting
$\psi(t)=\left(t\,\log{\frac1t}\right)^{-n/p}>0.$ If
$\varepsilon_0>0$ is sufficiently small, we obtain
from (\ref{eqlogest}) that
\begin{equation*}
I(\varepsilon,
\varepsilon_0)=\int\limits_{\varepsilon}^{\varepsilon_0}\psi(t)dt
>C\cdot\log\log{\frac{1}
{\varepsilon}}
\end{equation*}
for some constant $C>0$ and $\varepsilon<\varepsilon_0$.
Using (\ref{eq31*}), one obtains that $f$ obeys the inequality
(\ref{eq3.7A}) with $q=1$. Now the conclusion of the theorem follows from
Lemma \ref{lem6}.
\end{proof}

\medskip
\begin{theo}\label{th5} 
{\em
Let $f:D\rightarrow{\Bbb R}^n,$ $n\ge 2$ be an open discrete ring
$(p, Q)$-map\-ping at a point $x_0\in D,$ $D^{\,\prime}:=f(D)\subset
B(0, r).$ If (\ref{eq5**}) holds for some $\delta_0\in (0, {\rm
dist}\, (x_0, \partial D)),$ then 
\begin{equation*}
|f(x)-f(x_0)|\le C(p, n, x_0, r)
\left(\int\limits_{|x-x_0|}^{\delta_0}\frac{dt}{tq^{1/(n-1)}_{x_0}(t)}\right)
^{\frac{-(n-1)^2}{n}}
\end{equation*}
for some constant $C(p, n, x_0, r)$ depending only on $p,$ $n,$
$x_0$ and $r,$ and every $x\in B(x_0,{\varepsilon_0}^{\,\prime});$
here $0<{\varepsilon_0}^{\,\prime}<{\rm dist}\,(x_0, \partial D).$}
\end{theo}

\medskip
\begin{proof}
This statement follows from Lemma \ref{lem6}, taking
$\psi(t)$ as in (\ref{eq44**}). Now (\ref{eq3.7A})
holds for $q=p/n$ (in view of (\ref{eq3B***}) and (\ref{eq26})), and the assertion of Theorem \ref{th5} again follows from Lemma \ref{lem6}.
\end{proof}

\bigskip

\medskip
{\small \leftline{\textbf{Anatoly Golberg}} \em{
\leftline{Department of Applied Mathematics,} \leftline{Holon
Institute of Technology,} \leftline{52 Golomb St., P.O.B. 305,}
\leftline{Holon 5810201, ISRAEL} \leftline{Fax: +972-3-5026615}
\leftline{e-mail: golberga@hit.ac.il}}}

\medskip

{\small \leftline{\textbf{Ruslan Salimov}}\em{ \leftline{Institute of
Applied Mathematics and Mechanics,}\leftline {National Academy of
Sciences of Ukraine,} \leftline{74 Roze Luxemburg St.,}
\leftline{Donetsk 83114, UKRAINE}\leftline{e-mail:
ruslan623@yandex.ru}}}

\medskip

{\small \leftline{\textbf{Evgeny Sevost'yanov}}\em{ \leftline{Institute of
Applied Mathematics and Mechanics,}\leftline {National Academy of
Sciences of Ukraine,} \leftline{74 Roze Luxemburg St.,}
\leftline{Donetsk 83114, UKRAINE}\leftline{e-mail:
brusin2006@rambler.ru}}}

\end{document}